\theoremstyle{plain}
\newtheorem{theorem}[equation]{Theorem}
\newtheorem*{theorem*}{Theorem}
\newtheorem*{prop*}{Proposition}
\newtheorem{lemma}[equation]{Lemma}
\newtheorem*{lemma*}{Lemma}
\newtheorem*{cor*}{Corollary}
\newtheorem*{fq*}{Main question}
\theoremstyle{definition}
\newtheorem{defn}[equation]{Definition}
\newtheorem*{defn*}{Definition}
\newtheorem{remark}[equation]{Remark}
\newtheorem*{remark*}{Remark}
\newtheorem*{notation*}{Notation}
\newcommand{\N}{\mathbf N}
\newcommand{\tuple}[1]{\mathbf{#1}}
\newcommand{\Z}{\mathbf Z}
\DeclareMathOperator{\End}{End}
\title[Tuples in an Abelian group]{Degeneration and orbits of tuples and subgroups in an Abelian group}
\author{Wesley Calvert}
\address{Department of Mathematics\\ Mail Code 4408\\ 1245 Lincoln
  Drive\\ Southern Illinois University\\ Carbondale, Illinois 62901
  USA}
\email{wcalvert@siu.edu}
\author{Kunal Dutta}
\address{The Institute of Mathematical Sciences\\CIT campus Taramani\\Chennai 600 113 India}
\email{kdutta@imsc.res.in}
\author{Amritanshu Prasad}
\address{The Institute of Mathematical Sciences\\CIT campus Taramani\\Chennai 600 113 India}
\email{amri@imsc.res.in}
\thanks{The first author was partially supported by a Fulbright-Nehru
  Senior Research Scholarship.}
\subjclass[2010]{20K10, 20K27, 20K30}
\keywords{Degeneration, Ulm invariants, back-and-forth relations, automorphism orbits, tuples, subgroups, reduced torsion abelian groups}
\begin{document}
\maketitle
\begin{abstract}
  A tuple (or subgroup) in a group is said to degenerate to another if the latter is an endomorphic image of the former.
  In a countable reduced abelian group, it is shown that if tuples (or finite subgroups) degenerate to each other, then they lie in the same automorphism orbit.
  The proof is based on techniques that were developed by Kaplansky and Mackey in order to give an elegant proof of Ulm's theorem.
  Similar results hold for reduced countably generated torsion modules over principal ideal domains.
  It is shown that the depth and the description of atoms of the resulting poset of orbits of tuples depend only on the Ulm invariants of the module in question (and not on the underlying ring).
  A complete description of the poset of orbits of elements in terms of the Ulm invariants of the module is given.
  The relationship between this description of orbits and a very different-looking one obtained by Dutta and Prasad for torsion modules of bounded order is explained.
\end{abstract}
\section{Introduction}
\label{sec:intro}
Let $A$ be a reduced countable torsion abelian group with automorphism group $G$.
For each positive integer $n$, $G$ acts on $A^n$ by the diagonal action:
\begin{equation*}
	g\cdot(a_1,\dotsc,a_n)=(g(a_1),\dotsc,g(a_n)) \text{ for all } g\in G,\;a_1,\dotsc,a_n\in A.
\end{equation*}
We wish to understand the nature of the orbits of this action.
We call these the orbits of $n$-tuples in $A$.
We are also interested in the orbits of subgroups of $A$ under the action of $G$.

More generally, let $R$ be a principal ideal domain, and let $A$ be reduced countably generated torsion $R$-module.
Let $G$ be the group of $R$-module automorphisms of $A$.
We wish to understand the orbits of $n$-tuples and submodules in $A$.
Besides the case where $R$ is the ring of rational integers (which corresponds to abelian groups), the most interesting case is where $R$ is the ring of polynomials in one variable with coefficients in a field.
In this case, isomorphism classes of $R$-modules correspond to similarity classes of linear endomorphisms of a vector space of possibly infinite dimension.
Automorphism orbits of $n$-tuples in an $R$-module describe the possible relative positions of $n$-tuples of vectors in the vector space with respect to an operator in the corresponding similarity class.

There are two reasons for the choice of hypotheses on $A$.
The first is that they define a class of modules for which there is a nice structure theorem, namely Ulm's theorem \cite{Ulm1933} (which is discussed in more detail in Section~\ref{sec:Ulm} of this article).
The second is that this class includes the class of finitely generated torsion modules which (since it includes finite abelian groups and similarity classes of matrices), is already of great interest and, from the perspective of this article, is not any easier.

By the primary decomposition of torsion modules, the problem reduces to the case where $A$ is primary.
Since primary modules over a principal ideal domain are the same as torsion modules over discrete valuation rings, we assume that $R$ is a discrete valuation ring and that $A$ is a reduced countably generated torsion module.
The isomorphism classes of such modules are determined by their Ulm invariants (see Section~\ref{sec:Ulm}).
Moreover, the set of possible Ulm invariants does not depend on the discrete valuation ring $R$.
Thus there is a bijective correspondence between the set of isomorphism classes of reduced countably generated $R$-modules and $R'$-modules for any two discrete valuation rings $R$ and $R'$.
This suggests comparing the sets of orbits of $n$-tuples for a fixed set of Ulm invariants as $R$ varies.

The case where $A$ has bounded order (but is not necessarily countably
generated) and $n=1$ is studied in \cite{Dutta2011a}.
In particular, a combinatorial description of automorphism orbits in finite abelian groups $p$-groups is obtained, giving a better understanding of the classical results of Miller \cite{Miller1905} and Birkhoff \cite{Birkhoff1935}.
 The notion of degeneration is used to parametrize the set of orbits
 by the lattice of order ideals in a poset $P_f$ which is independent
 of $R$ (see Section~\ref{sec:orbits-of-elements} for more detail).
A formula is obtained for the cardinality of the orbit associated to each ideal in $P_f$.
It is found that the cardinality depends on $R$ only through the order of its residue field, and that this cardinality is a polynomial in $q$ which is easily calculated.

 In this article, some of the results of \cite{Dutta2011a} are extended to the case where $A$ is a reduced countably generated torsion $R$-module and $n>1$.
It is shown that degeneration does give rise to a partial order on the
set of orbits of $n$-tuples for all positive integers $n$.
A similar result is obtained for the set of automorphism orbits of
finitely generated submodules  (Theorem~\ref{theorem:partial-order}).
It is shown that this poset of $G$-orbits in $A$ is independent of $R$ (Theorem~\ref{theorem:poset-elements}) and that the poset is not dependent on the values of the Ulm invariants, but only on the set of ordinals for which they are positive (the latter is the analog of the result in \cite{Dutta2011a} that the poset does not depend on the multiplicities of the cyclic factors).
 The proofs in \cite{Dutta2011a} relied on the fact that every finitely generated torsion $R$-module is a direct sum of cyclic modules.
 This fails in the more general case considered here.
 Moreover, it is not clear how to extend those methods to the analysis of orbits of $n$-tuples for $n>1$.
 Instead, we use ideas that come out of a proof of Ulm's theorem given by Kaplansky and Mackey \cite{Kaplansky1970,Kaplansky-Mackey}; mainly the fact that height-preserving isomorphisms of finite subgroups can be extended to isomorphisms of groups with the same Ulm invariants (Lemma~\ref{lemma:crucial}) and the resulting criterion due to Barker \cite[Proposition~3.2]{Barker1995} for a pair of $n$-tuples to lie in the same orbit.

As to whether or not there is an order-preserving correspondence between
orbits of $n$-tuples as $R$ varies over discrete valuation rings with the same residue field is a question that remains
unresolved for $n>1$. Theorems~\ref{theorem:depth} and
\ref{theorem:atoms} are some partial results in this direction; they
state that the depth of the poset of $n$-tuples, and the set of its
atoms, do not depend on $R$.

The Kaplansky-Mackey machinery can be used to describe the
poset of $G$-orbits of elements of $A$ by using Ulm sequences.
We discuss this in Section~\ref{sec:orbits-of-elements}. The
dictionary for going between Ulm sequences and the very different
combinatorial description of orbits in \cite{Dutta2011a} is given in Section~\ref{sec:ulm-sequences-order}.
\section{Ulm and Barker Theorems}
\label{sec:Ulm}
We now give a brief overview of Kaplansky and Mackey's proof of Ulm's theorem following \cite{Kaplansky1970}.

Let $R$ be a discrete valuation ring with prime ideal generated by $p$.
Let $A$ be a reduced torsion $R$-module.
For each ordinal $\alpha$, define a submodule $A_\alpha$ inductively as follows:
let $A_0=A$.
If $\alpha=\beta+1$ define $A_\alpha$ to be $pA_\beta$.
If $\alpha$ is a limit ordinal, define $A_\alpha$ to be the intersection of $A_\beta$ as $\beta$ ranges over all ordinals $\beta<\alpha$.

Let $P(A)$ denote the submodule of $A$ consisting of elements that are annihilated by $p$:
\begin{equation*}
	P(A)=\{a\in A\;|\;pa=0\}.
\end{equation*}
For each ordinal $\alpha$, let $P(A)_\alpha=P(A)\cap A_\alpha$.
The quotient $P(A)_\alpha/P(A)_{\alpha+1}$ may be regarded as a vector space over the residue field of $R$.
Its dimension is denoted by $f_\alpha(A)$ and called the $\alpha$th Ulm invariant of $A$.
\begin{theorem*}[Ulm]
	Two reduced countably generated torsion $R$-modules are isomorphic if and only if they have the same Ulm invariants.
\end{theorem*}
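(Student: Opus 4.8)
The plan is to treat the two directions separately, the ``only if'' being a formality and the ``if'' carrying all the weight. For the forward direction I would note that any isomorphism $\phi\colon A\to B$ satisfies $\phi(A_\alpha)=B_\alpha$ for every ordinal $\alpha$, by transfinite induction (multiplication by $p$ at successors, intersection at limits); hence $\phi$ restricts to isomorphisms $P(A)_\alpha\to P(B)_\alpha$ compatible with the filtrations, inducing isomorphisms $P(A)_\alpha/P(A)_{\alpha+1}\to P(B)_\alpha/P(B)_{\alpha+1}$ of residue-field vector spaces, so that $f_\alpha(A)=f_\alpha(B)$ for all $\alpha$.

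For the converse, assume $f_\alpha(A)=f_\alpha(B)$ for all $\alpha$. Since $A$ is reduced, $A_\tau=0$ for some ordinal $\tau$, so every nonzero $a\in A$ has a \emph{height} $h_A(a)$, the largest $\alpha$ with $a\in A_\alpha$ (set $h_A(0)=\infty$), and similarly in $B$. Call an isomorphism $\psi\colon S\to T$ between submodules of $A$ and $B$ \emph{height-preserving} if $h_B(\psi(s))=h_A(s)$ for all $s$. I would then construct the desired isomorphism $A\to B$ as the union of an increasing chain of height-preserving isomorphisms between finitely generated submodules, obtained by the familiar back-and-forth: fix countable generating sets of $A$ and of $B$, begin with the trivial isomorphism $0\to 0$, and alternately enlarge the domain to absorb the next generator of $A$ and (running the same step on the inverse) the range to absorb the next generator of $B$. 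Everything then reduces to a single extension lemma --- that a height-preserving isomorphism of finitely generated submodules extends to one whose domain contains an arbitrary prescribed $a\in A$ --- which is the core of the Kaplansky--Mackey argument and the step I expect to be the main obstacle.

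To prove that lemma I would assume $a\notin S$ and first replace $a$ by a suitable element of the coset $a+S$ that is \emph{proper} with respect to $S$, meaning $h_A(a+s)\le h_A(a)$ for all $s\in S$; propriety forces $a\notin S+A_{\alpha+1}$ with $\alpha:=h_A(a)$. Inducting on the order of $a+S$ in $A/S$ (adjoining $p^{k-1}a$ first when $k$ is least with $p^k a\in S$) reduces to the case $pa\in S$. If $pa=0$, then $a\in P(A)_\alpha$ and, by propriety, its class in $P(A)_\alpha/P(A)_{\alpha+1}$ is not in the span of the image of $S\cap P(A)_\alpha$; since $\psi$ identifies that span with an equal-dimensional subspace of $P(B)_\alpha/P(B)_{\alpha+1}$ and $f_\alpha(B)=f_\alpha(A)$, there remains a $b\in P(B)_\alpha$ proper with respect to $T$ and independent of it, and I set $\psi'(a)=b$. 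If $pa\neq 0$, let $c=\psi(pa)$; then $h_B(c)=h_A(pa)>\alpha$, so $c\in pB_\alpha$, and I choose $b_0$ with $pb_0=c$ and $h_B(b_0)\ge\alpha$, correcting $b_0$ by an element of $P(B)_\alpha$ of height exactly $\alpha$ (available, again, because $f_\alpha(A)=f_\alpha(B)$ and the relevant Ulm quotient of $B$ is not exhausted by $T$) if $b_0$ has height $>\alpha$ or collides with $T$, and set $\psi'(a)=b$ for the resulting $b$. A direct verification shows $\psi'$ is a well-defined height-preserving isomorphism of $S+Ra$ onto $T+Rb$; the only nonroutine check is the height of a general element $ra+s$, which follows from propriety of $a$ and of $b$.

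Granting the lemma, the union of the back-and-forth chain is an $R$-linear bijection $A\to B$, which proves the theorem. The real work is the extension lemma, and inside it the one delicate point --- the point that actually consumes the hypothesis $f_\alpha(A)=f_\alpha(B)$ --- is guaranteeing that the image element can be chosen with exactly the prescribed height while avoiding collision with the part of $B$ already in the range.
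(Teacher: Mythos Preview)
Your proposal is correct and follows exactly the Kaplansky--Mackey route the paper adopts: isolate the one-step extension lemma for height-preserving isomorphisms of finitely generated submodules (the paper's Lemma~\ref{lemma:crucial}) and then run the back-and-forth over countable generating sets of $A$ and $B$. You in fact go further than the paper, which merely cites the lemma from \cite{Kaplansky1970}, by sketching its proof; your case split and identification of the delicate point (producing $b$ of the exact prescribed height, proper with respect to $T$, which is where $f_\alpha(A)=f_\alpha(B)$ is spent) are the standard ones.
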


The notion of height is crucial to the proof of Ulm's theorem due to Kaplansky and Mackey: we say that a non-zero element $a\in A$ has height $\alpha$, and write $h(a)=\alpha$, if $a$ is in $A_\alpha$ but not in $A_{\alpha+1}$.
We set $h(0)=\infty$, with the symbol $\infty$ deemed to be greater than any ordinal.
The following lemma \cite[p. 30]{Kaplansky1970} is a crucial step in their proof:
\begin{lemma}
	\label{lemma:crucial}
	Let $R$ be a discrete valuation ring.
	Let $A$ and $B$ be reduced countably generated torsion $R$-modules with the same Ulm invariants.
	Suppose that $S$ and $T$ are finitely generated submodules of $A$ and $B$ and $V:S\to T$ is a height-preserving isomorphism.
	Let $a$ be any element of $A$ not in $S$.
	Then $V$ extends to a height-preserving isomorphism of the submodule generated by $S$ and $a$ to a submodule of $B$.
\end{lemma}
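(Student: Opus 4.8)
The plan is to follow Kaplansky and Mackey, peeling off two reductions and then solving a single height-matching problem inside $B$.

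\emph{First reduction: $pa\in S$.} Since $A$ is torsion there is a least $k\ge 1$ with $p^k a\in S$, and we may adjoin $p^{k-1}a,\,p^{k-2}a,\dotsc,\,pa,\,a$ to $S$ one element at a time: by minimality of $k$ each new generator $p^{j}a$ is not yet in the module built so far, while its multiple $p^{j+1}a$ already is. So it suffices to treat the case $pa\in S$ (the case $a\in S$ being handled by the identity extension). \emph{Second reduction: maximal height in the coset.} Replacing $a$ by $a+s$ for $s\in S$ alters neither $S$ nor $\gp{S,a}$, and extending $V$ by $a+s\mapsto b$ amounts to extending it by $a\mapsto b-Vs$. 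A short argument using that $S$ is finitely generated (each generator of $S$ has finite order, which limits to finitely many the heights $h(s)$ that can occur as $s$ runs over the cyclic piece it generates; then induct on the number of generators) shows $\{h(a+s):s\in S\}$ attains a maximum, so after such a replacement we may assume $h(a+s)\le h(a)$ for all $s\in S$, while still $pa\in S$ and $a\notin S$.

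Now write $c=pa\in S$, $d=Vc\in T$, and $\gamma=h(a)$. Since $V$ is height-preserving, $h(d)=h(c)$, so in particular $d\in pB$ (trivially so if $c=0$). I claim it is enough to produce $b\in B$ with
\begin{equation*}
  pb=d\qquad\text{and}\qquad h(b+Vt)=h(a+t)\ \text{ for every } t\in S .
\end{equation*}
Indeed every element of $\gp{S,a}\setminus S$ has the form $ua+t$ with $u\in R$ a unit and $t\in S$ (here $a\notin S$ is used), the assignment $a\mapsto b$ forces $ua+t\mapsto ub+Vt$ by $R$\dash linearity, and since multiplication by a unit preserves height, the two displayed conditions ensure that $V\cup\{a\mapsto b\}$ extends to a height-preserving isomorphism of $\gp{S,a}$ onto $\gp{T,b}$.

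The heart of the matter is producing such a $b$. Choose any $b_0\in B$ with $pb_0=d$; the desired element will be $b=b_0-z$ for a suitable $z\in P(B)$, so that $pb=d$ persists automatically. After the maximal-height reduction every target height $h(a+t)$ is $\le\gamma$, so, roughly, one wants $z$ of height exactly $\gamma$: adding something of height $\gamma$ cannot spoil a height already $<\gamma$, and it can correct a height currently too large. The existence in $B$ of elements of $P(B)$ of the heights one needs — and of the heights one needs relative to the finitely generated subgroup $P(B)\cap\gp{T,b_0}$ — is exactly where the hypothesis $f_\alpha(A)=f_\alpha(B)$ is used: since $f_\alpha(B)=\dim P(B)_\alpha/P(B)_{\alpha+1}$, every element of $P(A)$ one must imitate in $B$ has a counterpart. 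When a required height is a limit ordinal $\lambda$ one cannot name the witness outright; instead one builds it as the limit of a sequence of successive approximations whose heights are cofinal in $\lambda$, and this transfinite climbing terminates because $A$, hence $B$, is countably generated — the sole point at which countability enters.

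The main obstacle is the case $c=pa\ne 0$ with a \emph{gap}, i.e.\ $h(pa)>h(a)+1$: then $pa$ is highly divisible while $a$ is not, and reproducing this in $B$ forces one to find $b$ with $pb=d$ and $h(b)=\gamma$ although $h(d)>\gamma+1$. Depending on the Ulm invariants of the interval $(\gamma,h(d))$ this is achieved either via a nonzero $f_\alpha$ inside the gap or by climbing $b_0$ up to a limit height, and these alternatives have to be analysed separately and then reconciled with the simultaneous constraints imposed by all the other $t\in S$. Arranging that one choice of $z$ meets every one of these height requirements at once — rather than any single one in isolation — is the delicate technical core; everything else is routine bookkeeping with the elementary properties of height ($h(x+y)\ge\min(h(x),h(y))$, with equality when $h(x)\ne h(y)$).
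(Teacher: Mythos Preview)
The paper does not prove this lemma; it simply quotes it from Kaplansky \cite[p.~30]{Kaplansky1970} and moves on. So there is no in-paper proof to compare against, and your proposal has to stand on its own.

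Your two reductions are the right ones, and the reformulation ``find $b$ with $pb=d$ and $h(b+Vt)=h(a+t)$ for all $t\in S$'' is correct. But from that point on you describe the difficulty rather than resolve it, and two things go wrong.

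First, you miss the simplification that makes the construction feasible. After the maximal-height reduction, $a$ is \emph{proper} with respect to $S$: $h(a+s)\le h(a)=\gamma$ for all $s\in S$. Then $h(a+s)=h(s)$ whenever $h(s)<\gamma$, and $h(a+s)=\gamma$ whenever $h(s)\ge\gamma$. So the infinitely many height constraints collapse to just two: $h(b)=\gamma$ and $b$ proper with respect to $T$. Kaplansky then splits on whether $h(pa)=\gamma+1$ or $h(pa)>\gamma+1$. In the gap case, properness of $a$ forces its class in $P(A)_\gamma/P(A)_{\gamma+1}$ to lie outside the image of $P(S)_\gamma$, and a finite-dimensional count using $f_\gamma(A)=f_\gamma(B)$ produces the matching $b$. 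In the non-gap case one chooses $b_0$ with $pb_0=d$, $h(b_0)=\gamma$, and shows that either $b_0$ is already proper or some offending $t_0\in T$ pushes one back to the gap case after replacing $b_0$ by $b_0+t_0$. This argument is the content; calling it ``routine bookkeeping'' is not a proof.

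Second, your claim that countable generation is used here, via a ``transfinite climbing'' to a limit height, is incorrect. The one-step extension needs no countability hypothesis on $A$ or $B$; countability enters only later, when the one-step extensions are strung together in the back-and-forth. No limit process is required to manufacture $b$.

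A smaller point: your argument that $\{h(a+s):s\in S\}$ attains a maximum is not right as written (heights of sums are not determined by heights of summands, so the induction on the number of generators does not go through). The clean reason is that the chain $(S\cap A_\alpha)_\alpha$ strictly decreases only finitely often since $S$ has finite length, so nonzero elements of $S$ realise only finitely many heights; an infinite strictly increasing sequence $h(a+s_n)$ would then yield infinitely many distinct heights $h(s_n-s_{n+1})$ in $S$.
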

Kaplansky and Mackey prove Ulm's theorem from this lemma as follows: enumerate generating sets of $A$ and $B$ by sequences $\{a_n\}$ and $\{b_n\}$.
Starting with $S$ as the trivial submodule of $A$ and $T$ as the trivial submodule of $B$, use Lemma~\ref{lemma:crucial} to get a height-preserving isomorphism $V_1$ of the sub-module $S_1$ generated by $a_1$ onto a submodule $T_1$ of $B$.
Let $V_2$ be an extension of the inverse of $V_1$ to a height-preserving isomorphism of the submodule $T_2$ generated by $T_1$ and $b_1$ onto a submodule of $A$.
Continue going back and forth in this manner, taking care of the $n$th generator of $A$ at the $(2n-1)$st step and the $n$th generator of $B$ at the $2n$th step to eventually obtain an isomorphism of $A$ onto $B$.

Besides Ulm's theorem, an important corollary \cite[Exercise~38]{Kaplansky1970} of Lemma~\ref{lemma:crucial}, which is non-trivial even in the case of finitely generated torsion $R$-modules, is
\begin{lemma}[Extendability Lemma]
	\label{lemma:extensibility}
	Let $R$ be a discrete valuation ring.
	Let $A$ and $B$ be reduced countably generated torsion $R$-modules with the same Ulm invariants.
	Then every height-preserving isomorphism of a finitely generated submodule of $A$ onto a submodule of $B$ extends to an isomorphism of $A$ onto $B$.
\end{lemma}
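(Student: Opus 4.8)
The plan is to deduce this from Lemma~\ref{lemma:crucial} by the same back-and-forth procedure that Kaplansky and Mackey use to prove Ulm's theorem, the only change being that the process is initialized with the given isomorphism $V\colon S\to T$ rather than with the trivial submodule. Two preliminary remarks make this legitimate. First, since $S$ is finitely generated and $V$ is an isomorphism, $T=V(S)$ is a finitely generated submodule of $B$, so the triple $(S,T,V)$ already satisfies the hypotheses of Lemma~\ref{lemma:crucial}. Second, the condition that $A$ and $B$ have the same Ulm invariants is symmetric in $A$ and $B$, so Lemma~\ref{lemma:crucial} applies equally with the roles of $A$ and $B$ exchanged; in particular $V^{-1}\colon T\to S$ can be extended one element at a time on the $B$-side.

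Next, fix sequences $\{a_n\}_{n\geq 1}$ and $\{b_n\}_{n\geq 1}$ generating $A$ and $B$ respectively, which exist because both modules are countably generated. I would build increasing chains of finitely generated submodules $S=S_0\subseteq S_1\subseteq\cdots$ of $A$ and $T=T_0\subseteq T_1\subseteq\cdots$ of $B$ together with height-preserving isomorphisms $V_k\colon S_k\to T_k$, where $V_0=V$ and each $V_{k+1}$ extends $V_k$. At an odd stage $k+1=2m-1$, let $n$ be least with $a_n\notin S_k$ (if there is none, leave everything unchanged), and use Lemma~\ref{lemma:crucial} to extend $V_k$ to a height-preserving isomorphism $V_{k+1}$ of $S_{k+1}:=\gp{S_k,a_n}$ onto a submodule $T_{k+1}$ of $B$. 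At an even stage $k+1=2m$, symmetrically let $n$ be least with $b_n\notin T_k$ and apply Lemma~\ref{lemma:crucial} to $V_k^{-1}$ to extend it over $b_n$, taking $V_{k+1}$ to be the inverse of the resulting isomorphism. At each stage $S_{k+1}$ and $T_{k+1}$ remain finitely generated, being generated by a finitely generated submodule together with one additional element, so the construction can continue indefinitely.

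Finally I would pass to the limit. Set $A'=\bigcup_k S_k$, $B'=\bigcup_k T_k$, and let $V_\infty\colon A'\to B'$ be the common extension of the $V_k$; this is well defined and is a module homomorphism because the $V_k$ form a compatible increasing chain, it is injective because each $V_k$ is, and it maps onto $B'$. By the bookkeeping every $a_n$ lies in some $S_k$, so $A'$ is a submodule of $A$ containing a generating set, whence $A'=A$; likewise $B'=B$. Thus $V_\infty$ is an isomorphism $A\to B$ extending $V$, and since each $V_k$ is height-preserving and height is intrinsic to the ambient module, $V_\infty$ is height-preserving as well.

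There is essentially no hard mathematical content beyond Lemma~\ref{lemma:crucial}; the main obstacle is entirely bookkeeping. The points that need care are ensuring that every generator of each of $A$ and $B$ is eventually absorbed into the chain (so alternating the odd and even stages and always choosing the least unhandled index), verifying that every $S_k$ and $T_k$ stays finitely generated so that Lemma~\ref{lemma:crucial} remains applicable at each stage, and using the symmetry of the hypothesis on Ulm invariants to justify the back steps. When $A$ (and hence $B$) happens to be finitely generated the chain stabilizes after finitely many steps, but the argument is otherwise unchanged.
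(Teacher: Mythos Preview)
Your proposal is correct and is exactly the approach the paper has in mind: it does not spell out a proof of the Extendability Lemma but presents it as an immediate corollary of Lemma~\ref{lemma:crucial}, having just described the Kaplansky--Mackey back-and-forth for Ulm's theorem starting from the trivial submodules. Your argument is precisely that construction initialized at $(S,T,V)$ instead of at $(0,0,0)$, with the same alternation between the $A$-side and the $B$-side; the only cosmetic difference is that the paper simply absorbs $a_n$ at step $2n-1$ and $b_n$ at step $2n$, whereas you take the least unhandled index, which is an equivalent bookkeeping choice.
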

	 Barker's criterion for two $n$-tuples to lie in the same orbit follows from Lemma~\ref{lemma:extensibility}:
\begin{theorem*}[Barker's criterion]
	Let $R$ be a principal ideal domain.
	Let $A$ be a reduced countably generated torsion $R$-module.
	The $n$-tuples $(a_1,\dotsc,a_n)$ and $(b_1,\dotsc,b_n)$ of $A$ lie in the same automorphism orbit if and only if
	\begin{equation}
		\label{eq:Barker}
		h(r_1a_1+\dotsb+r_na_n)=h(r_1b_1+\dotsb+r_nb_n) \text{ for all } r_1,\dotsc, r_n\in R.
	\end{equation}
\end{theorem*}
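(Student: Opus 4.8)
The plan is to derive Barker's criterion as a fairly direct consequence of the Extendability Lemma (Lemma~\ref{lemma:extensibility}). First I would reduce to the situation handled there: by the primary decomposition of torsion modules, using that every $R$-module automorphism preserves each primary component and that the $p$-height of an element of $A$ agrees with its height in the $p$-primary summand, it suffices to prove the criterion when $R$ is a discrete valuation ring and $A$ is a reduced countably generated torsion $R$-module.

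The ``only if'' direction is immediate: each submodule $A_\alpha$ is characteristic, so if $g\in G$ satisfies $g(a_i)=b_i$ for all $i$, then $g$ preserves heights, and applying $g$ to $r_1a_1+\dotsb+r_na_n$ yields $r_1b_1+\dotsb+r_nb_n$, which gives \eqref{eq:Barker}.

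For the converse, assume \eqref{eq:Barker}. Let $S$ and $T$ be the submodules of $A$ generated by $a_1,\dotsc,a_n$ and by $b_1,\dotsc,b_n$ respectively; both are finitely generated. I would define $V\colon S\to T$ by $V(r_1a_1+\dotsb+r_na_n)=r_1b_1+\dotsb+r_nb_n$. The hard part---indeed the only nontrivial point---is checking that $V$ is well defined: if $\sum_i r_ia_i=\sum_i s_ia_i$, then $h\big(\sum_i(r_i-s_i)a_i\big)=\infty$, so \eqref{eq:Barker} forces $h\big(\sum_i(r_i-s_i)b_i\big)=\infty$; but in a reduced module an element of height $\infty$ must be zero (its maximal divisible submodule $\bigcap_\alpha A_\alpha$ being trivial), so $\sum_i(r_i-s_i)b_i=0$. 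This is exactly where the hypothesis that $A$ is reduced is used. Granting well-definedness, $V$ is an $R$-module homomorphism, and the same computation with the two tuples interchanged shows its inverse is well defined, so $V$ is an isomorphism; moreover \eqref{eq:Barker} says precisely that $V$ preserves heights. Lemma~\ref{lemma:extensibility}, applied with $B=A$, then extends $V$ to an automorphism $g$ of $A$, and $g(a_i)=V(a_i)=b_i$ for all $i$, so the two $n$-tuples lie in the same orbit.

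Beyond the well-definedness of $V$ nothing is difficult: the Extendability Lemma carries all the weight, and the remaining points---the reduction to a discrete valuation ring, the $R$-linearity of $V$, and the ``only if'' direction---are routine verifications.
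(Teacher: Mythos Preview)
Your argument is correct and follows essentially the same route as the paper: use the height convention $h(0)=\infty$ (together with reducedness) to see that $a_i\mapsto b_i$ is a well-defined height-preserving isomorphism between the submodules generated by the two tuples, then invoke the Extendability Lemma with $B=A$. You supply a few details the paper leaves implicit---the reduction from a PID to a discrete valuation ring, the trivial ``only if'' direction, and the explicit role of reducedness in forcing an element of height $\infty$ to vanish---but the strategy is identical.
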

\begin{proof}
	The condition (\ref{eq:Barker}), because of our convention for the height of $0$, ensures that if $r_1a_1+\dotsc+r_na_n=0$, then $r_1b_1+\dotsc+r_nb_n=0$.
	Therefore, $a_i\mapsto b_i$ defines a height-preserving isomorphism from the submodule generated by $a_1,\dotsc,a_n$ onto the submodule generated by $b_1,\dotsc,b_n$.
	The extensibility lemma (Lemma~\ref{lemma:extensibility}) applied to this isomorphism ensures the existence of an automorphism of $A$ which takes $a_i$ to $b_i$ for each $i$.
\end{proof}

We need a lemma \cite[Exercise~39]{Kaplansky1970} about extending homomorphisms, whose proof is simpler than that of the Extendability Lemma.
\begin{lemma}
	\label{lemma:hom-extensibility}
	Let $R$ be a discrete valuation ring and $A$ and $B$ be reduced torsion $R$-modules, with $A$ countably generated.
	Let $S$ be a finitely generated submodule of $A$.
	Let $V$ be a height-increasing homomorphism of $S$ into $B$.
	Then $V$ can be extended to a homomorphism from all of $A$ into $B$.
\end{lemma}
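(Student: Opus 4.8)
The plan is to bootstrap from a single one-element extension step --- handling the rest via the countable generation of $A$ --- and then to reduce that step to the case where $p$ times the adjoined element already lies in the submodule. First I would enumerate a generating set $a_1,a_2,\dots$ of $A$. Starting from $S_0=S$, $V_0=V$, and assuming inductively that $V_n$ is a height-increasing homomorphism into $B$ of a finitely generated submodule $S_n\supseteq\{a_1,\dots,a_n\}$ extending $V$, an application of the one-element step below to $S_n$ and $a_{n+1}$ (there is nothing to do if $a_{n+1}\in S_n$) produces a height-increasing homomorphism $V_{n+1}$ of $S_{n+1}:=\langle S_n,a_{n+1}\rangle$ extending $V_n$; it is precisely because each step is again height-increasing that the next one is admissible. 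Then $\bigcup_n V_n$ is a homomorphism on $\bigcup_n S_n=A$ extending $V$. Thus the lemma reduces to extending a height-increasing homomorphism $V\colon S\to B$, with $S$ finitely generated, to $\langle S,a\rangle$ for a single $a\in A\setminus S$, keeping it height-increasing.

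Next I would reduce this to the case $pa\in S$. Let $k\ge1$ be least with $p^{k}a\in S$ (such a $k$ exists because $A$ is torsion) and set $a^{(i)}=p^{k-i}a$, so that $S=\langle S,a^{(0)}\rangle\subseteq\langle S,a^{(1)}\rangle\subseteq\dots\subseteq\langle S,a^{(k)}\rangle=\langle S,a\rangle$ with $p\,a^{(i)}=a^{(i-1)}\in\langle S,a^{(i-1)}\rangle$; applying the $pa\in S$ case to each inclusion in turn settles the general one. So assume $pa\in S$ and put $s_0=pa$. I would then replace $a$ by a representative of the coset $a+S$ that is \emph{proper with respect to} $S$, meaning $h(a+s)\le h(a)=:\alpha$ for every $s\in S$; the existence of such a representative for finitely generated $S$ is a standard fact and is exactly the point that enters Kaplansky's proof of Lemma~\ref{lemma:crucial}. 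Every element of $\langle S,a\rangle$ then has the form $s+ra$ with $s\in S$ and $r\in R$, and $s+ra=s'+r'a$ forces $p\mid(r'-r)$ and $s-s'=\tfrac{r'-r}{p}\,s_0$; hence for any $b\in B$ with $pb=V(s_0)$ the rule $V'(s+ra)=V(s)+rb$ gives a well-defined homomorphism $\langle S,a\rangle\to B$ extending $V$.

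Finally I would choose $b$ so as to make $V'$ height-increasing, which is the only place where the hypotheses are used. Since $h(V(s_0))\ge h(s_0)=h(pa)\ge\alpha+1$, we have $V(s_0)\in B_{\alpha+1}=pB_\alpha$, so I may pick $b\in B_\alpha$ with $pb=V(s_0)$, whence $h(b)\ge\alpha$. When $p\mid r$ one has $ra\in S$ and $V'(s+ra)=V(s+ra)$, of height $\ge h(s+ra)$ because $V$ is height-increasing; when $r$ is a unit, replacing $s$ by $r^{-1}s$ reduces the desired inequality to $h(b+V(s))\ge h(a+s)$ for all $s\in S$. Here properness gives $h(a+s)\le\alpha$, and the ultrametric property of heights gives $h(a+s)=h(s)$ whenever $h(s)<\alpha=h(a)$; together with $h(b)\ge\alpha$ and $h(V(s))\ge h(s)$ this yields $h(b+V(s))\ge\min\bigl(h(b),h(V(s))\bigr)$, which is $\ge\alpha\ge h(a+s)$ if $h(s)\ge\alpha$ and is $\ge h(s)=h(a+s)$ if $h(s)<\alpha$. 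So $V'$ is height-increasing, and the step is complete.

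The one genuinely nontrivial ingredient is the existence of a proper representative of the coset $a+S$ for finitely generated $S$; granting it --- it is the same fact already used in the proof of Lemma~\ref{lemma:crucial} --- everything else is a direct computation with the filtration $\{A_\alpha\}$. This is also why the lemma is easier than the Extendability Lemma (Lemma~\ref{lemma:extensibility}): there is no surjectivity to arrange and no Ulm invariants to match, so one never needs $b$ of a prescribed height, only of height at least $h(a)$.
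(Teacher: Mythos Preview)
Your argument is correct. Note that the paper does not actually supply a proof of this lemma: it is quoted as \cite[Exercise~39]{Kaplansky1970} with only the remark that its proof is simpler than that of the Extendability Lemma. What you have written is the standard solution to that exercise and indeed parallels Kaplansky's one-element extension step for Lemma~\ref{lemma:crucial}, with exactly the simplification you identify at the end: here one needs only $h(b)\ge\alpha$, not $h(b)=\alpha$, so no matching of Ulm invariants is required and the choice of $b$ is immediate from $V(pa)\in B_{\alpha+1}=pB_\alpha$.

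The one point you flag as nontrivial --- existence of a proper representative of $a+S$ when $S$ is finitely generated --- does hold, and since you invoke it without proof it is worth recording why. A finitely generated torsion module over a discrete valuation ring has finite length, so any chain of submodules of $S$ takes only finitely many distinct values. If the heights in $a+S$ had an unattained supremum $\lambda$ (necessarily a limit ordinal), the descending family $\{S\cap A_\beta\}_{\beta<\lambda}$ would stabilize at some $\beta_0<\lambda$, forcing $S\cap A_{\beta_0}\subseteq\bigcap_{\beta<\lambda}A_\beta=A_\lambda$. Choosing $x_\gamma\in(a+S)\cap A_\gamma$ for each $\gamma\in[\beta_0,\lambda)$ then gives $x_\gamma-x_{\beta_0}\in S\cap A_{\beta_0}\subseteq A_\lambda\subseteq A_\gamma$, hence $x_{\beta_0}\in A_\gamma$ for all such $\gamma$, i.e.\ $x_{\beta_0}\in A_\lambda$ --- a contradiction. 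This is the same finiteness that underlies Kaplansky's choice of a proper element in the proof of Lemma~\ref{lemma:crucial}, so your cross-reference is apt.
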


\section{The Degeneration Partial Order}
\label{sec:degeneration}
\begin{defn*}[Degeneration]
	Let $A$ and $B$ be $R$-modules for some ring $R$.
	For $n$-tuples $\tuple a=(a_1,\dotsc,a_n)$ and $\tuple b=(b_1,\dotsc,b_n)$ in $A$ and $B$ respectively,
	we say that $\tuple a$ degenerates to $\tuple b$, and write $\tuple a\to \tuple b$, if there exists an $R$-module homomorphism $\phi:A\to B$ such that $\phi(a_i)=b_i$ for $i=1,\dotsc,n$.
	For submodules $S$ and $T$ in $A$ and $B$ respectively, we say that $S$ degenerates to $T$ (written as $S\to T$) if a homomorphism maps $S$ onto $T$.
\end{defn*}
We are primarily interested in the case where $A=B$.
Clearly, if $n$-tuples or subgroups of $A$ lie in the same $G$-orbit, then they degenerate to each other.
Barker's criterion implies the converse:
\begin{theorem}
	\label{theorem:partial-order}
	Let $R$ be a principal ideal domain and $A$ be a reduced countably generated torsion $R$-module.
	\begin{enumerate}
	\item \label{item:tuples} If $n$-tuples $\tuple a$ and $\tuple b$ of $A$ degenerate to each other ($\tuple a \to \tuple b$ and $\tuple b\to \tuple a$) then they lie in the same $G$-orbit.
	\item \label{item:subgroups} If finite submodules $S$ and $T$ degenerate to each other ($S\to T$ and $T\to S$) then they lie in the same $G$-orbit.
	\end{enumerate}
\end{theorem}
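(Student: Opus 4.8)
The plan is to derive both parts from Barker's criterion and the Extendability Lemma (Lemma~\ref{lemma:extensibility}), using a single elementary observation: an $R$-module homomorphism never decreases height. Indeed, if $\phi\colon A\to A$ is a homomorphism then $\phi(A_\alpha)\subseteq A_\alpha$ for every ordinal $\alpha$, by transfinite induction --- the relation $\phi(pA_\beta)=p\,\phi(A_\beta)$ handles successor ordinals, and $\phi$ respects the intersection of a descending chain of submodules that it preserves, which handles limit ordinals. Hence $h(\phi(x))\ge h(x)$ for every $x\in A$, the convention $h(0)=\infty$ taking care of $\ker\phi$.

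For the tuple case, suppose $\tuple a\to\tuple b$ via $\phi$ and $\tuple b\to\tuple a$ via $\psi$. For any $r_1,\dotsc,r_n\in R$ we have $\phi(r_1a_1+\dotsb+r_na_n)=r_1b_1+\dotsb+r_nb_n$, whence $h(r_1b_1+\dotsb+r_nb_n)\ge h(r_1a_1+\dotsb+r_na_n)$; applying $\psi$ in the other direction gives the reverse inequality, so the two tuples satisfy the height identity~(\ref{eq:Barker}). Barker's criterion then places $\tuple a$ and $\tuple b$ in the same $G$-orbit.

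For the submodule case, suppose $\phi(S)=T$ and $\psi(T)=S$; the goal is to produce a height-preserving isomorphism $S\to T$, to which Lemma~\ref{lemma:extensibility} (with $B=A$) applies to give an automorphism of $A$ carrying $S$ onto $T$. Since $S$ is a finitely generated torsion module over a principal ideal domain it has finite length, so the surjective endomorphism $(\psi\phi)|_S$ of $S$ is injective; this forces $\phi|_S\colon S\to T$ to be injective as well as surjective, hence an isomorphism. To see it preserves heights, set $\sigma=(\psi\phi)|_S\in\Aut(S)$. Because $\psi\phi$ preserves the filtration $\{A_\alpha\}$, $\sigma$ maps each $S\cap A_\alpha$ into itself, and being an injective endomorphism of the finite-length module $S\cap A_\alpha$ it maps $S\cap A_\alpha$ onto itself; using injectivity of $\sigma$ on $S$ one then gets $x\in A_\alpha\iff\sigma(x)\in A_\alpha$ for $x\in S$, i.e.\ $h(\sigma(x))=h(x)$. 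Since $h(x)\le h(\phi(x))\le h(\psi\phi(x))=h(\sigma(x))=h(x)$, we conclude $h(\phi(x))=h(x)$ for all $x\in S$, as required.

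The tuple case is essentially immediate; the content is in the submodule case, and the point to be careful about is that a pair of mutual surjections of modules need not consist of isomorphisms, and that an abstract automorphism of $S$ need not respect heights computed in the ambient module $A$. Both difficulties are resolved by the finite length of $S$: it makes $S$ --- and each $S\cap A_\alpha$ --- Hopfian, which upgrades surjectivity to bijectivity and pins down the heights. (If ``finite submodule'' is meant literally this is a special case; the argument uses only that $S$ is finitely generated, equivalently of finite length.)
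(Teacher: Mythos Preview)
Your proof is correct. Part~\ref{item:tuples} is exactly the paper's argument: homomorphisms are height-increasing, so mutual degeneration forces the height identity~(\ref{eq:Barker}), and Barker's criterion finishes.

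For part~\ref{item:subgroups} you take a genuinely different route. The paper reduces~\ref{item:subgroups} to~\ref{item:tuples}: it uses finiteness of $\Aut(S)$ to find $N$ with $(\psi\phi)^N=\id{S}$, replaces $\phi$ by $\phi(\psi\phi)^{N-1}$ so that $\phi$ and $\psi$ become mutual inverses on $S$ and $T$, picks generators $a_1,\dotsc,a_n$ of $S$, sets $b_i=\phi(a_i)$, and observes that $\tuple a$ and $\tuple b$ now degenerate to each other, whence~\ref{item:tuples} applies. You instead show directly that $\phi|_S$ is a height-preserving isomorphism and invoke the Extendability Lemma: the key step is that $\sigma=(\psi\phi)|_S$ restricts to an injective, hence bijective, endomorphism of each finite-length submodule $S\cap A_\alpha$, which pins down heights and gives the sandwich $h(x)\le h(\phi(x))\le h(\sigma(x))=h(x)$. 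Your argument is slightly more general: it uses only that $S$ has finite length (equivalently, is finitely generated), whereas the paper's argument genuinely needs $\Aut(S)$ finite, i.e.\ $S$ literally finite --- which is why the paper adds the remark restricting to residue-finite PIDs for the finitely generated case. The paper's approach has the virtue of making the reduction to the tuple case transparent; yours buys the extra generality you note in your final paragraph.
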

\begin{remark}
	The finiteness hypothesis on the submodules in \ref{item:subgroups} is satisfied for all finitely generated submodules if, for every maximal ideal $M$ of $R$, $R/M$ is a finite field.
	This includes finite abelian groups and polynomial rings over finite fields.
\end{remark}
\begin{proof}[Proof of \ref{item:tuples}]
	Suppose $\phi\in \End_RA$ is such that $\phi(a_i)=b_i$ for each $i$.
	Since homomorphisms are height-increasing, for all $r_1,\dotsc,r_n\in R$,
	\begin{equation*}
		h(r_1a_1+\dotsb+r_na_n)\geq h(r_1b_1+\dotsb+r_nb_n).
	\end{equation*}
	Thus if $\tuple a\to \tuple b$ and $\tuple b\to \tuple a$, we get
	\begin{equation*}
		h(r_1a_1+\dotsb+r_na_n)=h(r_1b_1+\dotsb+r_nb_n)
	\end{equation*}
	for all $r_1,\dotsc,r_n\in R$. 
	By Barker's criterion, $\tuple a$ and $\tuple b$ lie in the same $G$-orbit.
\end{proof}
\begin{proof}[Proof of \ref{item:subgroups}]
	Suppose than an endomorphism $\phi$ of $A$ maps $S$ onto $T$ and another endomorphism $\psi$ maps $T$ onto $S$.
	Since the restriction of $\psi\circ\phi$ to $S$ is a surjective endomorphism of the finite $R$-module $S$, must be invertible on $S$.
	Since the automorphism group of $S$ is finite, there exists a natural number $N$ such that $(\psi\circ\phi)^N$ is the identity on $S$.
	By replacing $\phi$ by $(\psi\circ\phi)^{N-1}\circ\psi$, one may assume that $\phi$ and $\psi$ are mutual inverses when restricted to $S$ and $T$ respectively.
	
	Let $a_1,\dotsc,a_n$ be a generating set for $S$, and set $b_i=\phi(a_i)$ for each $i$.
	These are $n$-tuples which degenerate to each other, so by \ref{item:tuples} there exists an automorphism of $A$ mapping one to the other.
	This automorphism also maps $S$ onto $T$.
\end{proof} 
Theorem~\ref{theorem:partial-order} says that degeneration induces a partial order on the set of $G$-orbits of $n$-tuples and subgroups of $A$.
\begin{theorem}[Criterion for degeneration]
	\label{theorem:partial-order-from-heights}
	Let $R$ be a discrete valuation ring and $A$ and $B$ be reduced torsion $R$-modules, with $A$ countably generated.
	For $n$-tuples $\tuple a$ and $\tuple b$ in $A$ and $B$ respectively, $\tuple a$ degenerates to $\tuple b$ if and only if
	\begin{equation}
		\label{eq:height-increasing}
		h(r_1a_1+\dotsb+r_na_n)\leq h(r_1b_1+\dotsb+r_nb_n)
	\end{equation}
	for all $r_1,\dotsc,r_n\in R$.
\end{theorem}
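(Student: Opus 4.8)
The plan is to prove the two implications separately; the forward one is immediate from the behaviour of homomorphisms on heights, and the converse is an application of the homomorphism extension lemma (Lemma~\ref{lemma:hom-extensibility}). For the forward direction, suppose $\tuple a$ degenerates to $\tuple b$, so there is $\phi\in\Hom_R(A,B)$ with $\phi(a_i)=b_i$ for each $i$. I would first record the standard fact that any homomorphism is height-increasing: by transfinite induction on $\alpha$ one checks $\phi(A_\alpha)\subseteq B_\alpha$ (trivial for $\alpha=0$; at a successor use $A_{\alpha+1}=pA_\beta$ together with the inductive hypothesis; at a limit use that $A_\alpha$ is the intersection of the $A_\beta$), so $h(\phi(x))\ge h(x)$ for all $x\in A$. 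Applying this to $x=r_1a_1+\dotsb+r_na_n$, for which $\phi(x)=r_1b_1+\dotsb+r_nb_n$, yields (\ref{eq:height-increasing}).

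For the converse, assume (\ref{eq:height-increasing}) holds. Let $S$ be the (finitely generated) submodule of $A$ generated by $a_1,\dotsc,a_n$, and define $V\colon S\to B$ by $V(r_1a_1+\dotsb+r_na_n)=r_1b_1+\dotsb+r_nb_n$. The step that needs care is well-definedness: if $r_1a_1+\dotsb+r_na_n=0$, then its height is $\infty$, so by (\ref{eq:height-increasing}) the height of $r_1b_1+\dotsb+r_nb_n$ is also $\infty$, hence $r_1b_1+\dotsb+r_nb_n=0$; thus $V$ is a well-defined $R$-module homomorphism. By (\ref{eq:height-increasing}) once more, $h(V(x))\ge h(x)$ for every $x\in S$, so $V$ is height-increasing in the sense required by Lemma~\ref{lemma:hom-extensibility}. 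Since $R$ is a discrete valuation ring, $A$ and $B$ are reduced torsion, $A$ is countably generated, and $S$ is finitely generated, that lemma extends $V$ to a homomorphism $\phi\colon A\to B$. As $\phi(a_i)=V(a_i)=b_i$ for each $i$, we conclude $\tuple a\to\tuple b$.

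The only genuinely delicate point is the well-definedness of $V$, and it is handled entirely by the convention $h(0)=\infty$ together with the hypothesis applied in the case $r_1a_1+\dotsb+r_na_n=0$; once $V$ is constructed, the result is an immediate invocation of Lemma~\ref{lemma:hom-extensibility}. So I do not expect any substantial obstacle here — the theorem is essentially a repackaging of the homomorphism extension lemma, in the same way that Barker's criterion repackages the Extendability Lemma.
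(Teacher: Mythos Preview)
Your proof is correct and follows essentially the same route as the paper: the forward direction uses that homomorphisms are height-increasing (which the paper cites as a known fact without the transfinite-induction details you supply), and the converse constructs the height-increasing map $V$ on the submodule generated by the $a_i$ --- with well-definedness handled via the convention $h(0)=\infty$, exactly as in the proof of Barker's criterion --- and then invokes Lemma~\ref{lemma:hom-extensibility}. Your write-up is simply a more expanded version of the paper's two-sentence argument.
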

\begin{proof}
	As in the proof of Barker's theorem, the condition on heights ensures that $a_i\mapsto b_i$ gives rise to a well-defined height-increasing homomorphism from the submodule of $A$ generated by $a_1,\dotsc,a_n$ to $B$.
	By Lemma~\ref{lemma:hom-extensibility}, this homomorphism can be extended to a homomorphism on all of $A$ that takes $\tuple a$ to $\tuple b$.
\end{proof}
If the first part of Theorem~\ref{theorem:partial-order} tells us that degeneration descends to a partial order on the set of orbits of $n$-tuples in $A$,
Lemma~\ref{theorem:partial-order-from-heights} (with $A=B$) lets us read off the poset structure on the set of $G$-orbits in $A$ from heights of linear combinations.

\section{Height sequences and lengths of chains}
\label{sec:height-lattice}
For an $n$-tuple in $A$ and an ordinal $h$ define subgroups of $R^n$ by
\begin{equation*}
  M_h(\tuple a) = \{(r_1,\dotsc,r_n)\in R^n\;|\;h(r_1a_1+\dotsb+r_na_n)\geq h\}.
\end{equation*}
Thus each $n$-tuple $\tuple a$ gives rise to a chain of subgroups:
\begin{equation*}
  R^n = M_0(\tuple a) \supset M_1(\tuple a) \supset M_2(\tuple a)
  \supset \dotsb
\end{equation*}
which we call the \emph{height sequence of $\tuple a$}.

Theorem~\ref{theorem:partial-order} can be reformulated in terms of
height sequences:
\begin{theorem}
  \label{theorem:height-sequences}
  For $n$-tuples $\tuple a$ and $\tuple b$ in $A$, $\tuple a\to \tuple b$ if and only if $M_h(\tuple a)\subset M_h(\tuple b)$ for every ordinal $h$.
\end{theorem}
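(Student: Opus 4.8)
The plan is to recognize that Theorem~\ref{theorem:height-sequences} is simply a repackaging of the degeneration criterion (Theorem~\ref{theorem:partial-order-from-heights} taken with $B=A$) in the language of the subgroups $M_h$, so the whole task reduces to checking that the family of containments $M_h(\tuple a)\subset M_h(\tuple b)$, ranging over all ordinals $h$, says exactly the same thing as the inequality~(\ref{eq:height-increasing}), namely that $h(r_1a_1+\dotsb+r_na_n)\le h(r_1b_1+\dotsb+r_nb_n)$ for all $r_1,\dotsc,r_n\in R$. Once this dictionary is established, Theorem~\ref{theorem:partial-order-from-heights} yields $\tuple a\to\tuple b$ at once.

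For the forward direction of the dictionary I would assume~(\ref{eq:height-increasing}), fix an ordinal $h$ and a tuple $(r_1,\dotsc,r_n)\in M_h(\tuple a)$; then $h(\sum_i r_ib_i)\ge h(\sum_i r_ia_i)\ge h$, so $(r_1,\dotsc,r_n)\in M_h(\tuple b)$, giving $M_h(\tuple a)\subset M_h(\tuple b)$. For the reverse direction I would fix coefficients $r_1,\dotsc,r_n$, set $\alpha=h(\sum_i r_ia_i)$, and argue according to whether $\alpha$ is an ordinal or the symbol $\infty$: when $\alpha$ is an ordinal, $(r_1,\dotsc,r_n)\in M_\alpha(\tuple a)\subset M_\alpha(\tuple b)$ forces $h(\sum_i r_ib_i)\ge\alpha$, which is~(\ref{eq:height-increasing}).

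The only point that requires a moment's care is the case $\alpha=\infty$, i.e.\ $\sum_i r_ia_i=0$: here $(r_1,\dotsc,r_n)$ lies in $M_h(\tuple a)$, hence in $M_h(\tuple b)$, for \emph{every} ordinal $h$, so $\sum_i r_ib_i\in A_h$ for all $h$; since $A$ is reduced, $\bigcap_h A_h=0$, whence $\sum_i r_ib_i=0$ and $h(\sum_i r_ib_i)=\infty=\alpha$. I would also note, to justify the name \emph{height sequence}, that each $M_h(\tuple a)$ is genuinely an $R$-submodule of $R^n$, which is immediate from $h(x+y)\ge\min\{h(x),h(y)\}$ and $h(rx)\ge h(x)$; this is routine bookkeeping rather than a real obstacle. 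Assembling these observations and invoking Theorem~\ref{theorem:partial-order-from-heights} with $B=A$ completes the proof.
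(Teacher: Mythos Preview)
Your proof is correct and is exactly the approach the paper has in mind: the paper presents Theorem~\ref{theorem:height-sequences} without proof, simply as a reformulation of the earlier degeneration criterion, and you have supplied precisely the translation between the pointwise inequality~(\ref{eq:height-increasing}) and the containments $M_h(\tuple a)\subset M_h(\tuple b)$. Your handling of the $\alpha=\infty$ case via reducedness of $A$ is the right way to close that gap; note also that you have (correctly) identified Theorem~\ref{theorem:partial-order-from-heights} as the relevant input, whereas the paper's text somewhat loosely points to Theorem~\ref{theorem:partial-order}.
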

Although we are not yet in a position to describe the poset structure
of $G$-orbits of tuples under degeneration, the above reformulation of
Theorem~\ref{theorem:partial-order} allows us to give a bound on the
length of chains in the bounded order case which depends only on the
annihilator of $A$ and $n$ (and not on $R$):
\begin{theorem}
  \label{theorem:depth}
  Suppose that $A$ is annihilated by $p^k$.
  Then if
  \begin{equation*}
    \tuple a^{(0)}\to \tuple a^{(1)} \to \dotsb\to\tuple a^{(m)}
  \end{equation*}
  is a chain of degenerations with $m> nk(k+1)$ then there exists
  $0<i\leq m$ such that $\tuple a^{(i)}\to \tuple a^{(i-1)}$.
\end{theorem}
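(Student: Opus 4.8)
The plan is to translate the chain of degenerations, via Theorem~\ref{theorem:height-sequences}, into a chain of nested flags of submodules of $R^n$, and then to attach to each $n$-tuple a single non-negative integer that is monotone along degenerations and bounded by $nk(k+1)$. Once a value of this integer repeats, the corresponding degeneration step will be forced to be reversible.

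First I would extract the consequences of the hypothesis $p^kA=0$. Since $A_k=p^kA=0$, every nonzero element of $A$ has height in $\{0,1,\dots,k-1\}$; consequently, for every $n$-tuple $\tuple a$ we have $M_0(\tuple a)=R^n$ while $M_h(\tuple a)=M_k(\tuple a)$ for all $h\ge k$, so the height sequence of $\tuple a$ is encoded by the finite flag $R^n=M_0(\tuple a)\supseteq M_1(\tuple a)\supseteq\dots\supseteq M_k(\tuple a)$ (the subgroups $M_h(\tuple a)$ are in fact $R$-submodules of $R^n$, since scaling does not lower height). Moreover $p^kR^n\subseteq M_h(\tuple a)$ for every $h$, because $p^k$ annihilates $A$; hence $M_h(\tuple a)/p^kR^n$ is a submodule of $(R/p^kR)^n$, a module of length $nk$ over $R$. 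I would then set
\[
  \ell(\tuple a)=\sum_{h=0}^{k}\operatorname{length}_R\!\bigl(M_h(\tuple a)/p^kR^n\bigr),
\]
an integer with $0\le\ell(\tuple a)\le nk(k+1)$.

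The heart of the argument is the following: if $\tuple a\to\tuple b$ then $\ell(\tuple a)\le\ell(\tuple b)$, with equality only when $\tuple b\to\tuple a$. Indeed, by Theorem~\ref{theorem:height-sequences} the hypothesis $\tuple a\to\tuple b$ gives $M_h(\tuple a)\subseteq M_h(\tuple b)$ for all $h$, whence $\operatorname{length}_R(M_h(\tuple a)/p^kR^n)\le\operatorname{length}_R(M_h(\tuple b)/p^kR^n)$ for each $h$; summing gives $\ell(\tuple a)\le\ell(\tuple b)$. If equality holds, then it holds in every summand, so each inclusion $M_h(\tuple a)/p^kR^n\subseteq M_h(\tuple b)/p^kR^n$ of submodules of a finite-length module has source and target of equal length and is therefore an equality; thus $M_h(\tuple a)=M_h(\tuple b)$ for all $h$, and Theorem~\ref{theorem:height-sequences}, read in the reverse direction, yields $\tuple b\to\tuple a$.

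Applying this along the chain, $\ell(\tuple a^{(0)})\le\ell(\tuple a^{(1)})\le\dots\le\ell(\tuple a^{(m)})$ is a nondecreasing sequence of integers in $\{0,1,\dots,nk(k+1)\}$. If $m>nk(k+1)$, it cannot increase strictly at every one of its $m$ steps, so there is some $0<i\le m$ with $\ell(\tuple a^{(i-1)})=\ell(\tuple a^{(i)})$, and then $\tuple a^{(i)}\to\tuple a^{(i-1)}$. The only place where anything nontrivial happens is in the reductions above: it is the annihilation hypothesis that makes $\ell$ well defined (all lengths finite) and bounded by a quantity independent of $R$; everything after that is the monotonicity argument together with the elementary observation that a submodule of a module of finite length having the same length is the whole module. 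I would note in passing that using the sharper containment $p^hR^n\subseteq M_h(\tuple a)$ lets one replace $M_h(\tuple a)/p^kR^n$ by $M_h(\tuple a)/p^hR^n$ in the definition of $\ell$ and improve the bound to $m>\tfrac12 nk(k+1)$, but the cruder bound suffices for what is claimed.
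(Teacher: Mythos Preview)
Your proof is correct and follows essentially the same route as the paper's: both attach to each tuple the sum over $h=0,\dots,k$ of a size invariant of $M_h(\tuple a)/p^kR^n$, observe that this is integer-valued, monotone under degeneration, and bounded by $nk(k+1)$, and then apply pigeonhole. Your choice of $R$-module length in place of the paper's $\log_p$ of the cardinality is a small but genuine improvement, since it makes the argument go through for discrete valuation rings with infinite residue field without any adjustment.
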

\begin{proof}
  Since $A$ is annihilated by $p^k$, $M_h(a)\supset p^kR^n$.
  For each $\tuple a\in A$, define
  \begin{equation}
    \label{eq:1}
    N(\tuple a) = \sum_{h=0}^k \log_p|M_h(\tuple a)/p^kR^n|.
  \end{equation}
  $N$ takes integer values between
  $0$ and $nk(k+1)$.
  Moreover, since $0$ is the only element of height $\geq k$, the
  invariants $M_0,\dotsc,M_k$ are enough to distinguish between orbits
  of $n$-tuples.

  Clearly, if $\tuple a\to \tuple b$ then $N(\tuple a)\leq N(\tuple
  b)$.
  More importantly, if $\tuple a\to \tuple b$ and $N(\tuple a)= N(\tuple
  b)$, then $M_h(a)=M_h(b)$ for all $h$, and so $\tuple b\to \tuple
  a$.
  If $m$ in the sequence (\ref{eq:1}) exceeds $nk(k+1)$, then
  $N(\tuple a^{(i-1)})=N(\tuple a^{(i)})$ for some $i$, an so $\tuple
  a^{(i)}\to \tuple a^{(i-1)}$.
\end{proof}

\section{Atoms}
\label{sec:atoms}

By an atom in the poset of orbits of $n$-tuples in $A$ under degeneration, we mean
an orbit which can only degenerate to itself or to $0$.
It turns out that the characterization of atoms for tuples can be
reduced to the characterization of atoms for the poset of orbits of
elements of $A$.
In the next section, we will see that the poset of orbits of elements
of $A$ is independent of $R$ (and in fact depends only on which Ulm
invariants are positive).
Thus the following characterization of atoms will show that the set of
atoms in the poset of orbits of $n$-tuples is independent of $R$.

\begin{theorem}
  \label{theorem:atoms}
  Let $A$ be a reduced countably generated torsion $R$-module.
  Suppose $\tuple a = (a_1,\dotsc,a_n)$ can only degenerate to itself or
  to $0$, and that $a_1\neq 0$.
  Then then $\tuple a$ is of the form
  \begin{equation*}
    (a_1,r_2a_1,\dotsc,r_na_1)
  \end{equation*}
  where $a$ is an element of $A$ which can only degenerate to itself
  or to $0$, and $r_2,\dotsc,r_n$ are arbitrary elements of $R$.
\end{theorem}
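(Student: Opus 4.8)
The plan is to show that the atom hypothesis squeezes all of the $a_i$ onto the single line $Ra_1$, which in turn must sit at the very ``top'' of the module, where degeneration of tuples is governed purely by linear algebra over the residue field.

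First I would draw out structural consequences of the hypothesis. Pushing $\tuple a$ through the endomorphism ``multiplication by $p$'' gives a degeneration $\tuple a\to(pa_1,\dotsc,pa_n)$. Since $h(py)>h(y)$ for every $y\neq 0$ and $h(0)=\infty$, we have $h(pa_1)\neq h(a_1)$, so $(pa_1,\dotsc,pa_n)$ cannot lie in the orbit of $\tuple a$, and therefore equals $0$; thus $pa_i=0$ for all $i$, i.e. $a_1,\dotsc,a_n\in P(A)$. Next, $a_1$ as an element can only degenerate to $0$ or to its own orbit: given an endomorphism $\psi$ with $\psi(a_1)=b\neq 0$, the tuple $\psi(\tuple a)$ has nonzero first coordinate, so the atom hypothesis puts it in the orbit of $\tuple a$, and the automorphism witnessing this sends $a_1$ to $b$. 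This is already the second assertion of the theorem; moreover, writing $\alpha=h(a_1)$, the fact that $a_1$ is an atom element (together with $pa_1=0$) forces $P(A)_{\alpha+1}=0$, since a nonzero $b\in P(A)$ with $h(b)>\alpha$ would give $a_1\to b$ by Theorem~\ref{theorem:partial-order-from-heights} while sitting outside the orbit of $a_1$.

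Now let $S=\langle a_1,\dotsc,a_n\rangle\subseteq P(A)$, a finite-dimensional vector space over the residue field $\mathbb F=R/pR$. Because $P(A)_{\alpha+1}=0$, every nonzero element of $S$ has height $\le\alpha$, whereas every element of $A_\alpha$ has height $\ge\alpha$; hence \emph{any} $\mathbb F$-linear map $T\colon S\to A_\alpha$ is height-increasing and so, by Lemma~\ref{lemma:hom-extensibility}, extends to an endomorphism of $A$, giving a degeneration $\tuple a\to T(\tuple a)$. Suppose $\dim_{\mathbb F}S\ge 2$; choose a complement $U$ of the line $\mathbb F a_1$ in $S$ and let $T$ be the identity on $\mathbb F a_1$ and $0$ on $U$, so $T(S)=\mathbb F a_1$. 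Then $T(\tuple a)$ has first coordinate $a_1\neq 0$, so $T(\tuple a)\neq 0$, and by the atom hypothesis there is an automorphism $\sigma$ of $A$ with $\sigma(a_i)=T(a_i)$ for all $i$; but then $\sigma(S)=T(S)=\mathbb F a_1$, contradicting injectivity of $\sigma$ (which would force $\dim_{\mathbb F}\sigma(S)=\dim_{\mathbb F}S\ge 2$). Hence $\dim_{\mathbb F}S=1$, so $S=\mathbb F a_1=Ra_1$ and each $a_i=r_ia_1$ for suitable $r_i\in R$, which is the claimed form.

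The step I expect to carry the weight is the second paragraph --- converting ``$\tuple a$ is an atom'' into the rigidity that every $a_i$ is $p$-torsion, that $a_1$ is an atom element, and that $P(A)_{\alpha+1}=0$; once these are in place the rest is a short linear-algebra argument fed by the extension lemma. One should just be careful that the partial maps being extended really satisfy the hypotheses of the cited results: Lemma~\ref{lemma:hom-extensibility} only requires the map on the finitely generated submodule $S$ to be height-increasing, which is why checking the height inequality on generators suffices, and Theorem~\ref{theorem:partial-order-from-heights} (with $B=A$ and $n=1$) is what produces the degeneration $a_1\to b$ used to pin down $P(A)_{\alpha+1}$.
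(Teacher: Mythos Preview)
Your argument is correct and follows essentially the same route as the paper's: reduce to $P(A)$ via multiplication by $p$, observe that $a_1$ itself is an atom element, then kill a complement of $Ra_1$ by a projection extended through Lemma~\ref{lemma:hom-extensibility} to reach a contradiction. The paper works with the two-generator submodule $\langle a_1,a_i\rangle$ and contradicts by noting the $i$th coordinate has been sent to $0$, while you work with all of $S=\langle a_1,\dotsc,a_n\rangle$ and contradict via a dimension count; these are cosmetic variants. Your intermediate step $P(A)_{\alpha+1}=0$ is exactly what is needed to make the paper's ``obviously height-increasing'' claim rigorous, so your version is the more carefully justified of the two.
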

\begin{proof}
  The hypothesis obviously implies that $a$ can not degenerate to
  anything but itself or to $0$.
  It also implies that every element of $\tuple a$ is annihilated by
  $p$, since $x\mapsto px$ would lead to a non-trivial degeneration.
  Thus, if $\tuple a$ is not of the form claimed in the theorem, we
  may assume that some $a_i$ is a non-zero element of $P(A)$ --- that
  is, the submodule of $A$ consisting of elements annihilated by $p$ --- which is linearly
  independent of $a_1$ in the $R/(p)$-vector space $P(A)$.

  Let $S$ be the submodule of $A$ generated by $a_1$ and $a_i$.
  Since $a_1$ and $a_i$ are linearly independent, there exists an
  endomorphism of $S$ such that $a_1\mapsto a_1$ and $a_i\mapsto 0$
  (which is obviously height-increasing).
  Extending this (by Lemma~\ref{lemma:hom-extensibility}) to an
  endomorphism of $A$ gives a degeneration of $\tuple a$ to a non-zero
  tuple different from $\tuple a$, which is a contradiction.
\end{proof}

\section{Orbits of elements}
\label{sec:orbits-of-elements}
Kaplansky introduces the Ulm sequence of an element in \cite[Section~18]{Kaplansky1970}:
\begin{defn}[Ulm Sequence]
	Let $R$ be a discrete valuation ring with maximal ideal generated by $p$.
	For each $a\in A$, its height sequence is defined as the sequence $\{h_i\}_{i\geq 0}$ of ordinals where
	\begin{equation*}
		h_i=h(p^ia).
	\end{equation*}
	If $a\in A$ is a torsion element, then this sequence is strictly increasing until it stabilizes at the value $\infty$ after a finite number of steps.
\end{defn}
Ulm sequences characterize orbits and degenerations in reduced countably generated torsion $R$-modules (this is easily deduced from Barker's criterion; also see \cite[Theorem~24]{Kaplansky1970}):
\begin{theorem}
	\label{theorem:poset-elements}
	Let $R$ be a discrete valuation ring, and let $A$ be a reduced countably generated torsion $R$-module.
	Then 
	\begin{enumerate}
	\item $a$ degenerates to $b$ if and only if the Ulm sequence of $b$ dominates the Ulm sequence of $a$ term-wise.
	\item Two elements $a$ and $b$ of $A$ lie in the same $G$-orbit if and only if their Ulm sequences coincide.
	\end{enumerate}
\end{theorem}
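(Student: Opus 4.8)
The plan is to deduce both statements from the $n=1$ cases of Barker's criterion and of the degeneration criterion (Theorem~\ref{theorem:partial-order-from-heights}), after observing that for a \emph{single} element $a$ the whole family of heights $\{h(ra):r\in R\}$ is already encoded in the Ulm sequence $(h(a),h(pa),h(p^2a),\dots)$.

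First I would record the key reduction. Since $R$ is a discrete valuation ring, every nonzero $r\in R$ can be written $r=up^i$ with $u$ a unit of $R$ and $i\ge 0$. Multiplication by $u$ is an $R$-module automorphism of $A$ (its inverse is multiplication by $u^{-1}$), and every $R$-module automorphism of $A$ carries each $A_\alpha$ onto itself, since the submodules $A_\alpha$ are defined intrinsically from the module structure ($A_0=A$, $A_{\alpha+1}=pA_\alpha$, and intersections at limits); this follows by transfinite induction because a bijective homomorphism commutes with taking intersections. Hence automorphisms preserve height, and $h(ra)=h(u(p^ia))=h(p^ia)=h_i$. Together with $h(0\cdot a)=h(0)=\infty$, this shows that the function $r\mapsto h(ra)$ is determined by, and determines, the Ulm sequence $\{h_i\}_{i\ge 0}$ of $a$; likewise the family of inequalities $h(ra)\le h(rb)$ over all $r\in R$ is equivalent to the family $h_i(a)\le h_i(b)$ over all $i\ge 0$, that is, to term-wise domination of the Ulm sequence of $a$ by that of $b$.

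With this in hand, (1) is immediate: by Theorem~\ref{theorem:partial-order-from-heights} with $n=1$ and $B=A$, we have $a\to b$ if and only if $h(ra)\le h(rb)$ for every $r\in R$, which by the reduction is exactly term-wise domination of Ulm sequences. For (2), applying Barker's criterion with $n=1$ gives that $a$ and $b$ lie in the same $G$-orbit if and only if $h(ra)=h(rb)$ for all $r\in R$, which by the reduction is exactly the coincidence of their Ulm sequences. (Alternatively, (2) follows by applying (1) in both directions together with Theorem~\ref{theorem:partial-order}\ref{item:tuples}.)

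There is no substantial obstacle here; the only point needing care is the reduction step, and within it the verification that multiplication by a unit of $R$ is height-preserving --- which is precisely the statement that the canonical filtration $\{A_\alpha\}$ is stable under every $R$-automorphism. One may also note that the Ulm sequence of a torsion element is strictly increasing until it reaches $\infty$ after finitely many steps, so that the comparisons above in fact involve only finitely many distinct conditions, though this finiteness plays no role in the argument.
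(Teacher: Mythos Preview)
Your proof is correct and follows precisely the approach the paper indicates: it says the theorem ``is easily deduced from Barker's criterion,'' and you carry out exactly that deduction, supplying the one nontrivial observation (that for $n=1$ the full height function $r\mapsto h(ra)$ collapses to the Ulm sequence because every nonzero $r$ is a unit times a power of $p$, and multiplication by a unit preserves the transfinite filtration $\{A_\alpha\}$). Your use of Theorem~\ref{theorem:partial-order-from-heights} for part~(1) and Barker's criterion for part~(2) is the intended argument.
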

Given the Ulm invariants of a reduced countably generated torsion $R$-module $A$, the set of sequences of ordinals which arise as Ulm sequences in $A$ are characterized by Kaplansky \cite[Lammas~22 and~24]{Kaplansky1970}:
\begin{theorem}
	\label{theorem:Ulm-sequence-characterization}
	Let $R$ be a discrete valuation ring and let $A$ be a reduced countably generated torsion $R$-module.
	A sequence $\{h_i\}$ of ordinals which is strictly increasing until it stabilizes at $\infty$ after a finite number of steps is the Ulm sequence of an element of $a$ if and only if, whenever $h_n>h_{n-1}+1$, the Ulm invariant $f_{h_{n-1}}(A)$ is positive.
\end{theorem}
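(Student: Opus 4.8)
The plan is to argue both implications directly inside $A$, using only the height function and the inductive definition $A_{\alpha+1}=pA_\alpha$ (so that $A_{\alpha+2}=p^2A_\alpha$); no auxiliary "reference module" is needed.

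\emph{Necessity.} Suppose $a\in A$ has Ulm sequence $\{h_i\}$ and that $h_n>h_{n-1}+1$ for some $n\ge 1$ (the case $h_n=\infty$ included). Put $b=p^{n-1}a$; since $h_{n-1}<h_n$ we have $h_{n-1}\neq\infty$, so $b\neq 0$, $h(b)=h_{n-1}$, and $h(pb)=h_n\ge h_{n-1}+2$, i.e.\ $pb\in A_{h_{n-1}+2}=p^2A_{h_{n-1}}$. Write $pb=p^2c$ with $c\in A_{h_{n-1}}$. Then $b-pc\in P(A)$ (it is killed by $p$) and $b-pc\in A_{h_{n-1}}$, so $b-pc\in P(A)_{h_{n-1}}$; and $b-pc\notin A_{h_{n-1}+1}$, for otherwise $b=(b-pc)+pc\in A_{h_{n-1}+1}$ contradicts $h(b)=h_{n-1}$. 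Hence $b-pc$ represents a non-zero class in $P(A)_{h_{n-1}}/P(A)_{h_{n-1}+1}$, so $f_{h_{n-1}}(A)>0$.

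\emph{Sufficiency.} Induct on the number $k\ge 1$ of finite terms of the sequence $h_0<\dotsb<h_{k-1}<h_k=\infty$. For $k=1$, applying the hypothesis at $n=1$ (with $h_1=\infty>h_0+1$) gives $f_{h_0}(A)>0$, so there is $a\in P(A)_{h_0}\setminus P(A)_{h_0+1}$, which has $pa=0$ and $h(a)=h_0$, hence Ulm sequence $(h_0,\infty)$. For $k\ge 2$, the shifted sequence $(h_1,\dotsc,h_k=\infty)$ still satisfies the hypothesis, so by induction there is $c\in A$ with that Ulm sequence; it suffices to find $a$ with $pa=c$ and $h(a)=h_0$, since then $h(p^ia)=h(p^{i-1}c)=h_i$ for $i\ge 1$. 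If $h_1=h_0+1$, then $c\in A_{h_1}=pA_{h_0}$; any preimage $a\in A_{h_0}$ has $h(a)=h_0$, because $h(a)\ge h_0+1$ would force $h(c)\ge h_0+2>h_1$. If $h_1>h_0+1$, the hypothesis gives $f_{h_0}(A)>0$; since $c\in A_{h_1}\subseteq A_{h_0+2}=p^2A_{h_0}$, write $c=p^2d$ and set $a_0=pd$, so $pa_0=c$ and $h(a_0)\ge h_0+1$. Choosing $e\in P(A)_{h_0}\setminus P(A)_{h_0+1}$ and putting $a=a_0+e$ gives $pa=c$ and, since $h(e)=h_0\neq h(a_0)$, $h(a)=\min(h(a_0),h(e))=h_0$. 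In every case the Ulm sequence of $a$ is $(h_0,h_1,\dotsc,h_k)$.

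The main points that need care, rather than genuine obstacles, are: (i) translating the positivity $f_{h_0}(A)>0$ of an Ulm invariant into the existence of a $p$\dash torsion element of \emph{exactly} height $h_0$, and then combining it with a $p$\dash preimage of $c$ so that heights add correctly, via the standard fact that $h(x+y)=\min(h(x),h(y))$ whenever $h(x)\neq h(y)$; and (ii) the bookkeeping with the conventions $h(0)=\infty$ and $A_{\alpha+2}=p^2A_\alpha$, together with checking that the shifted sequence still meets the hypothesis (which is immediate, as its constraints form a subset of those for the original sequence).
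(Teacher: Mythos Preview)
Your argument is correct. The paper itself does not supply a proof of this theorem; it simply attributes the result to Kaplansky \cite[Lemmas~22 and~24]{Kaplansky1970} and states it without further justification. What you have written is essentially the standard argument (and, as far as one can tell, the same idea as Kaplansky's): for necessity you subtract off a $p$\dash multiple to produce a $p$\dash torsion element of the exact height witnessing a gap, and for sufficiency you build the element top-down by induction on the number of finite terms, lifting through $A_{h_0+1}=pA_{h_0}$ in the no-gap case and correcting the height with a $p$\dash torsion element of height $h_0$ in the gap case. The only points worth a second glance are the ones you already flag: that $A_{\alpha+2}=p^2A_\alpha$ follows directly from the successor step of the definition, that the convention $h(0)=\infty$ makes the final gap $h_k=\infty>h_{k-1}+1$ force $f_{h_{k-1}}(A)>0$, and that $h(x+y)=\min(h(x),h(y))$ when the heights differ. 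None of these needs the countable-generation hypothesis, so your proof in fact establishes the result for arbitrary reduced torsion $R$\dash modules.
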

Theorem~\ref{theorem:Ulm-sequence-characterization} allows us to give a combinatorial description of the poset of automorphism orbits in a reduced countably generated torsion module in terms of its Ulm invariants:
\begin{theorem}\label{theorem:poset-description}
	Let $R$ be a discrete valuation ring and let $A$ be a reduced countably generated torsion $R$-module with Ulm invariants $f=\{f_\alpha\}$.
	Let $H_f$ denote the set consisting of sequences of the form
	\begin{equation*}
		\{h_n\}=h_0,h_1,\dotsc,h_k,\infty,\infty,\dotsc
	\end{equation*}
	where $h_0<h_1<\dotsb<h_k$ are ordinals such that $f_{h_k}>0$, and
	whenever $h_n>h_{n-1}+1$, then $f_{h_{n-1}}>0$.
	For $\{h_n\}$ and $\{h'_n\}$ in $H_f$ say that $\{h_n\}\leq\{h'_n\}$ if and only if $h_n\geq h'_n$ for all $n\geq 0$.
	Then taking an element of $A$ to its Ulm sequence gives rise to an isomorphism of the poset of automorphism orbits in $A$ to $H_f$.	
\end{theorem}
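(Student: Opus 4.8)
The plan is to prove that the map $\Phi$ sending an automorphism orbit $\orbit a$ to the Ulm sequence $\{h_n(a)\}$ of any of its representatives is a well-defined order-isomorphism from the poset of orbits onto $H_f$, carried out in three steps: $\Phi$ is well-defined and injective, its image is exactly $H_f$, and $\Phi$ both preserves and reflects the order. Well-definedness and injectivity are immediate from Theorem~\ref{theorem:poset-elements}(2): two elements lie in the same $G$-orbit precisely when their Ulm sequences agree, so $\{h_n(a)\}$ depends only on $\orbit a$ and distinct orbits receive distinct sequences. (Here $\orbit 0$ maps to the constant sequence $\infty,\infty,\dotsc$, corresponding in the description of $H_f$ to the degenerate case in which the finite prefix $h_0<\dotsb<h_k$ is empty; I read this case as permitted.)

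For the image, observe that it is by definition the set of Ulm sequences of elements of $A$, which by Theorem~\ref{theorem:Ulm-sequence-characterization} consists of the sequences that are strictly increasing until they stabilise at $\infty$ after finitely many terms and that satisfy the implication ``if $h_n>h_{n-1}+1$ then $f_{h_{n-1}}(A)>0$''. I would then check that this set is $H_f$. Given such a sequence with last finite term $h_k$, we have $h_{k+1}=\infty$, and since $\infty$ exceeds every ordinal, $h_{k+1}>h_k+1$; the implication at index $k+1$ therefore forces $f_{h_k}>0$, which is exactly the extra clause in the definition of $H_f$. Conversely, that clause together with the increasing-then-$\infty$ shape and the remaining instances of the implication are precisely the hypotheses of Theorem~\ref{theorem:Ulm-sequence-characterization}. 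Hence the two sets agree and $\Phi$ is a bijection onto $H_f$.

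For the order, I would appeal to Theorem~\ref{theorem:poset-elements}(1): $a$ degenerates to $b$ if and only if the Ulm sequence of $b$ dominates that of $a$ term by term. Ordering orbits so that $\orbit a\le\orbit b$ when $b$ degenerates to $a$ --- the convention under which $\orbit 0$ is the least element --- this reads $\orbit a\le\orbit b$ iff $h_n(a)\ge h_n(b)$ for all $n$, which by the definition of $\le$ on $H_f$ is exactly $\Phi(\orbit a)\le\Phi(\orbit b)$. Thus $\Phi$ is an isomorphism of posets. (Under the opposite convention for orbits $\Phi$ becomes an anti-isomorphism, so the statement is to be read with $\orbit 0$ minimal.)

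The real content is already contained in Theorems~\ref{theorem:poset-elements} and~\ref{theorem:Ulm-sequence-characterization}, so I do not expect a genuine obstacle. The one step needing care is reconciling the two presentations of which sequences occur --- in particular noticing that the clause $f_{h_k}>0$ in the definition of $H_f$ is simply the $n=k+1$ instance of the implication ``$h_n>h_{n-1}+1\Rightarrow f_{h_{n-1}}>0$'' applied with $h_{k+1}=\infty$ --- together with pinning down the order conventions so that the correspondence comes out as an isomorphism rather than its dual.
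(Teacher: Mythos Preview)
Your proposal is correct and matches the paper's approach: the paper offers no explicit proof of this theorem, presenting it as an immediate consequence of Theorems~\ref{theorem:poset-elements} and~\ref{theorem:Ulm-sequence-characterization}, which is exactly what you unpack. Your care in reconciling the clause $f_{h_k}>0$ with the $n=k+1$ instance of the gap condition, and in fixing the order convention so that $\orbit 0$ is minimal, fills in details the paper leaves to the reader.
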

Theorem~\ref{theorem:poset-description} allows us to compare the
posets of automorphism orbits for different modules with the same Ulm invariants, even while allowing the discrete valuation ring to vary:
\begin{theorem}[Poset comparison]
\label{theorem:poset-comparison}
	Suppose $R_1$ and $R_2$ are two discrete valuation rings, and $A_1$ and $A_2$ are reduced countably generated torsion modules over $R_1$ and $R_2$ respectively such that, for each ordinal $\alpha$, the Ulm invariant $f_\alpha(A_1)$ is positive if and only if the Ulm invariant $f_\alpha(A_2)$ is positive.
	Then the posets of automorphism orbits of elements in $A_1$ and $A_2$ are canonically isomorphic.
\end{theorem}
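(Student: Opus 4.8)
The plan is to deduce the statement directly from Theorem~\ref{theorem:poset-description}, which already does the real work. First I would apply that theorem to each $A_i$: it supplies a poset isomorphism $\Phi_i$ from the poset of automorphism orbits of elements of $A_i$ onto the combinatorial poset $H_{f(A_i)}$, namely the map sending an element to its Ulm sequence. The isomorphism asserted in the theorem will then be $\Phi_2^{-1}\circ\Phi_1$, so everything reduces to showing that the two target posets $H_{f(A_1)}$ and $H_{f(A_2)}$ coincide --- not merely that they are abstractly isomorphic, but that they are literally the same set of sequences carrying the same order.

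Next I would unwind the definition of $H_f$. A sequence $h_0<h_1<\dotsb<h_k,\infty,\infty,\dotsc$ lies in $H_f$ precisely when $h_k$ belongs to the set $Z(A)=\{\alpha : f_\alpha(A)>0\}$ and, for every $n$ with $h_n>h_{n-1}+1$, the ordinal $h_{n-1}$ belongs to $Z(A)$. So the underlying set of $H_f$ depends on $f$ only through $Z(A)$, while the order relation on $H_f$ (namely $\{h_n\}\le\{h'_n\}$ iff $h_n\ge h'_n$ for all $n$) makes no reference to $f$ whatsoever. The hypothesis of the theorem is exactly that $Z(A_1)=Z(A_2)$; here I would also recall, from the structure theory of Section~\ref{sec:Ulm}, that the collection of ordinals which can occur as indices of nonzero Ulm invariants of a reduced countably generated torsion module does not depend on the discrete valuation ring, so that the equation $Z(A_1)=Z(A_2)$ is a meaningful comparison with no ambient obstruction. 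It follows that $H_{f(A_1)}=H_{f(A_2)}$ as posets, and hence $\Phi_2^{-1}\circ\Phi_1$ is an isomorphism of posets.

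Finally I would justify the word \emph{canonical}. Each $\Phi_i$ is canonical in that it involves no arbitrary choices --- an element simply maps to its Ulm sequence --- and the identification $H_{f(A_1)}=H_{f(A_2)}$ is an equality of posets, not a selected bijection; so the composite $\Phi_2^{-1}\circ\Phi_1$ is determined by the data $(R_1,A_1,R_2,A_2)$ alone. Concretely, it is the unique order isomorphism matching an orbit in $A_1$ with the orbit in $A_2$ having the same Ulm sequence.

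I do not anticipate a genuine obstacle: essentially all the content sits inside Theorem~\ref{theorem:poset-description}, and what remains is the bookkeeping observation that the combinatorial poset $H_f$ sees only the support of $f$. The one point deserving a careful sentence is the uniformity of ``which ordinals can be heights'' as $R$ varies, so that $Z(A_1)$ and $Z(A_2)$ live in the same ambient set of ordinals; this is exactly what the Ulm-theoretic setup of Section~\ref{sec:Ulm} provides.
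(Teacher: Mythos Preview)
Your proposal is correct and follows exactly the approach the paper intends: the paper offers no explicit proof of this theorem, merely prefacing it with the remark that ``Theorem~\ref{theorem:poset-description} allows us to compare the posets of automorphism orbits for different modules\ldots'', and your argument spells out precisely this deduction---that $H_f$ depends only on the support of $f$, so equal supports yield identical target posets and hence a canonical isomorphism via the Ulm-sequence maps.
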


\section{Ulm Sequences and Order Ideals}
\label{sec:ulm-sequences-order}
The cyclic $R$-module $R/p^kR$ has $k$ orbits of non-zero elements under the action of its automorphism group, represented by $1,p,\ldots,p^{k-1}$.
Let $P$ be the disjoint union over all $k\in \N$ of the orbits of non-zero elements in $R/p^kR$.
$P$ can be represented by the set
\begin{equation*}
  P = \big\{(v,\alpha)|\alpha\in \N,\; 0\leq v<\alpha\big\},
\end{equation*}
where $(v,\alpha)$ denotes the orbit of $p^v$ in $R/p^\alpha R$.
Degeneracy descends to a relation on $P$ given by 
$(v,\alpha)\geq (v',\alpha')$ if and only if $v'\geq v$ and $\alpha'-v'\leq \alpha-v$ \cite[Lemma~3.1]{Dutta2011a}.
It follows that this reflexive and transitive relation is in fact a partial order.
The Hasse diagram for $P$, which we call the fundamental poset, is given in Figure~\ref{fig:funda}.
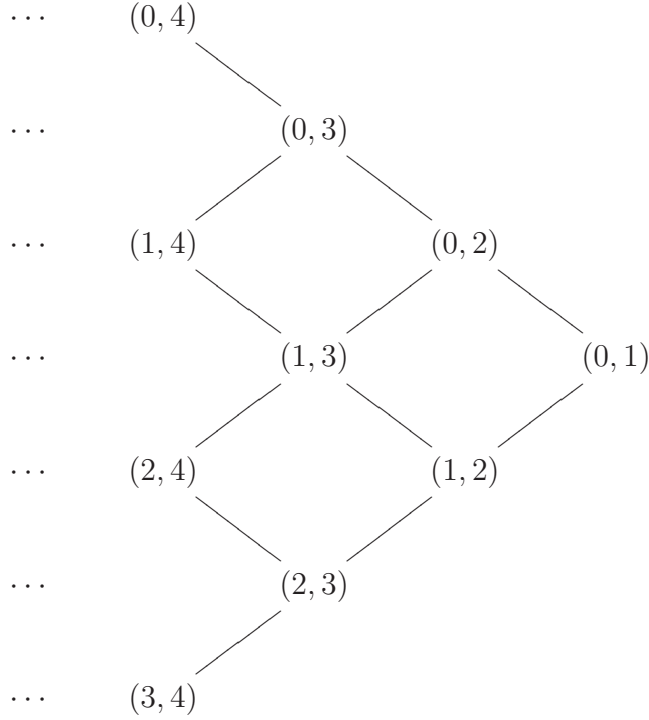
\begin{figure}
  \centering
  \begin{equation*}
    \xymatrix{
      \cdots & (0,4) \ar@{-}[dr] & & & \\
      \cdots && (0,3) \ar@{-}[dr] & & \\
      \cdots &(1,4) \ar@{-}[dr] \ar@{-}[ur] & & (0,2) \ar@{-}[dr] & \\
      \cdots && (1,3) \ar@{-}[dr] \ar@{-}[ur] & & (0,1)\\
      \cdots &(2,4) \ar@{-}[dr] \ar@{-}[ur] & & (1,2) \ar@{-}[ur] & \\
      \cdots && (2,3) \ar@{-}[ur] & & \\
      \cdots &(3,4) \ar@{-}[ur] & & &\\
    }
  \end{equation*}
  \caption{The fundamental poset $P$}
  \label{fig:funda}
\end{figure}

Now assume that $R$ is a discrete valuation ring and that $A$ is a torsion $R$-module of bounded order.
Then $A$ is a product of cyclic modules:
\begin{equation}
	\label{eq:bounded-order}
	A=\prod_{\alpha=1}^\infty (R/p^\alpha R)^{f_\alpha}
\end{equation}
for some sequence of cardinals $f=\{f_\alpha\}$.
Given $a\in A$, 
the order ideal in $P$ generated by the orbits of the non-zero coordinates $a_{\alpha i}\in R/p^\alpha R$ of $a$ with respect to the decomposition (\ref{eq:bounded-order}) is called the ideal of $a$, and denoted by $I(a)$. 

One may read off degeneracy of elements of $R$-modules of bounded order from the fundamental poset $P$.
\begin{theorem}[{\cite[Theorem~4.1]{Dutta2011a}}]
  \label{theorem:degeneracy}
  Let $R$ be a discrete valuation ring.
  Let $A$ and $B$ be torsion $R$-modules of bounded order.
  Given $a\in A$ and $b\in B$, $a$ degenerates to $b$ if and only if $I(a)\supset I(b)$.
  When $A=B$, then $a$ and $b$ lie in the same automorphism orbit if and only if $I(a)=I(b)$.
\end{theorem}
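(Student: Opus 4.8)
The plan is to deduce the theorem from Theorem~\ref{theorem:poset-elements}, which already expresses degeneration and orbit-equivalence of \emph{elements} through term-wise domination and equality of Ulm sequences, by supplying a dictionary between the Ulm sequence of an element $a$ of a bounded-order module and the order ideal $I(a)\subset P$. Write $h_i=h(p^ia)$ for the Ulm sequence of $a$, and let $h_0<h_1<\dotsb<h_k<\infty$ be its finite terms. The heart of the argument is a lemma asserting two things: that $h_i$ equals the minimum, over all orbits $(v,\alpha)\in I(a)$, of the $i$th term of the Ulm sequence of $(v,\alpha)$; and that $I(a)$ is the order ideal of $P$ generated by the orbits $(h_i-i,\,h_i+1)$ for $0\le i\le k$. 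Granting this lemma, the theorem follows at once. By Theorem~\ref{theorem:poset-elements}, $a\to b$ holds if and only if $h_i(a)\le h_i(b)$ for all $i$. If those inequalities hold, then for each $i$ the generator $(h_i(b)-i,\,h_i(b)+1)$ of $I(b)$ lies in $P$ below the generator $(h_i(a)-i,\,h_i(a)+1)$ of $I(a)$ --- the second coordinate minus the first is $i+1$ in both cases, while the first coordinate can only increase on passing from $a$ to $b$ --- so $I(b)\subset I(a)$. Conversely, if $I(b)\subset I(a)$, then the first part of the lemma gives $h_i(a)\le h_i(b)$ for every $i$, since the minimum defining $h_i(a)$ is taken over a set containing the one defining $h_i(b)$. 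Applying both implications, $I(a)=I(b)$ if and only if $h_i(a)=h_i(b)$ for all $i$, which by Theorem~\ref{theorem:poset-elements} is exactly the condition for $a$ and $b$ to lie in the same orbit when $A=B$.

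To prove the lemma I would first record two elementary computations. A nonzero coordinate of $a$ in $R/p^\alpha R$, with respect to the decomposition~(\ref{eq:bounded-order}), has orbit $(v,\alpha)$ for a unique $0\le v<\alpha$, and its Ulm sequence is $v,v+1,\dotsc,\alpha-1,\infty,\infty,\dotsc$; and because $A$ has bounded order there are no limit-ordinal complications, so $p^hA=\prod p^h(R/p^\alpha R)^{f_\alpha}$ and hence $h(p^ia)$ is the minimum of $h(p^ia_{\alpha j})$ over the nonzero coordinates of $a$. Together these say that the Ulm sequence of $a$ is the term-wise minimum of the Ulm sequences of its nonzero coordinates; and since passing from an orbit to a smaller one in $P$ only raises each term of its Ulm sequence, this minimum over nonzero coordinates equals the minimum over the whole ideal $I(a)$, which is the first part of the lemma. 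For the second part, the inclusion $\supset$ comes from observing that for each $i\le k$ there is a nonzero coordinate of $a$, with orbit $(v,\alpha)$, realizing $h_i=v+i$ with $v+i<\alpha$, so that $v=h_i-i\ge 0$ and $\alpha\ge h_i+1$, whence $(h_i-i,\,h_i+1)\le(v,\alpha)$ in $P$; the inclusion $\subset$ comes from observing that an arbitrary nonzero coordinate with orbit $(v,\alpha)$ has its last finite Ulm term in position $i^{*}=\alpha-1-v$, so term-wise minimality gives $h_{i^{*}}\le v+i^{*}=\alpha-1<\infty$, forcing $i^{*}\le k$, and then $(v,\alpha)\le(h_{i^{*}}-i^{*},\,h_{i^{*}}+1)$ because $\alpha-v=i^{*}+1$ and $v\ge h_{i^{*}}-i^{*}$. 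Every comparison in $P$ used here is an instance of the stated order $(v,\alpha)\ge(v',\alpha')$ iff $v'\ge v$ and $\alpha'-v'\le\alpha-v$, combined with the fact that a torsion Ulm sequence increases by at least $1$ at each step until it reaches $\infty$.

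The one point that needs care, and which I expect to be the main obstacle, is the translation in the second part of the lemma: one must move repeatedly between the height/index bookkeeping $(h_i,i)$ of an Ulm sequence and the valuation/order bookkeeping $(v,\alpha)$ of $P$, and in particular choose, at each level $i$, the coordinate of $a$ that both witnesses the value $h_i$ and does not force a smaller ideal than claimed. Once the lemma is in place everything else is purely formal; the same computation yields both the degeneration statement and the orbit statement of the theorem with no extra work, and it also makes transparent the bijection with the poset $H_f$ of Theorem~\ref{theorem:poset-description}, namely $\{h_n\}\mapsto$ the order ideal of $P$ generated by the $(h_i-i,\,h_i+1)$.
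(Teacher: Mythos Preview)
The paper does not give its own proof of this statement; it is quoted verbatim from \cite{Dutta2011a}. Your approach---reducing to the Ulm-sequence criterion of Theorem~\ref{theorem:poset-elements} and then building the dictionary between the Ulm sequence of $a$ and the ideal $I(a)$---is correct, and is in fact precisely the content that the paper develops in the final theorem of Section~\ref{sec:ulm-sequences-order}, where the maps $\{h_n\}\mapsto I(\{h_n\})$ and $I\mapsto\kappa(I)$ are shown to be mutually inverse isomorphisms $H_f\cong J(P_f)$. Your two-part lemma (that $h_i(a)$ is the minimum of the $i$th Ulm terms over $I(a)$, and that $I(a)$ is generated by the orbits $(h_i-i,\,h_i+1)$) is exactly the computation carried out there, so in spirit you are reproducing the paper's own machinery, just in the service of a different theorem.

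One genuine caveat on scope: Theorem~\ref{theorem:poset-elements}, and the extension Lemma~\ref{lemma:hom-extensibility} on which the nontrivial direction rests, require $A$ to be countably generated. Theorem~\ref{theorem:degeneracy} as stated---and as proved in \cite{Dutta2011a}---covers \emph{all} torsion $R$-modules of bounded order, including uncountable products. Your reduction therefore establishes the result only in the countably generated case. The proof in \cite{Dutta2011a} avoids this restriction by working directly with the decomposition into cyclic factors and constructing the required homomorphism coordinate-wise, without invoking the Kaplansky--Mackey extension lemmas at all. If you want the full strength of the theorem, you would need either to supply that direct construction or to argue (which is not hard in the bounded-order setting) that the height-increasing map on the cyclic submodule generated by $a$ extends to all of $A$ without the countability hypothesis.
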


It also gives a concrete interpretation of the poset of orbits in a torsion $R$-module of bounded order under the degeneration partial order:
\begin{theorem}[{\cite[Theorem~5.4]{Dutta2011a}}]\label{theorem:old-poset-description}
	Let $R$ be a discrete valuation ring an let $A$ be a torsion $R$-module of bounded order as in (\ref{eq:bounded-order}).
	Let $P_f$ be the induced subposet $P$ consisting those $(v,\alpha)\in P$ for which $f_\alpha>0$. 
	The poset of $G$-orbits in $A$ is isomorphic to the lattice $J(P_f)$ of order ideals in $P_f$.
\end{theorem}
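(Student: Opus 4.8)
The plan is to show directly that the assignment $[a]\mapsto I(a)$, where $[a]$ denotes the $G$-orbit of $a\in A$, is the claimed isomorphism. By Theorem~\ref{theorem:degeneracy} almost everything needed is already in hand, and the only substantive point left to establish is surjectivity. Throughout I order the set of $G$-orbits by declaring $[b]\leq[a]$ precisely when $a$ degenerates to $b$ (this is a partial order by Theorem~\ref{theorem:partial-order}, and it is plainly well defined at the level of orbits since degeneration is compatible with the $G$-action), and I order $J(P_f)$ by inclusion.

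Well-definedness and the order-embedding property are immediate from Theorem~\ref{theorem:degeneracy}: since $a$ and $b$ lie in the same orbit if and only if $I(a)=I(b)$, the map $[a]\mapsto I(a)$ is well defined and injective; and since $a$ degenerates to $b$ if and only if $I(a)\supseteq I(b)$, we get $[b]\leq[a]$ if and only if $I(b)\subseteq I(a)$. The one bookkeeping point is that $I(a)$ was introduced as an order ideal of the whole fundamental poset $P$, while the target is the lattice of order ideals of the induced subposet $P_f$. But by construction $I(a)$ is the down-closure in $P$ of a set of orbits $(v,\alpha)$ with $f_\alpha>0$, i.e. of a subset of $P_f$; hence $I(a)$ is recovered from $I(a)\cap P_f$ by taking down-closure in $P$, and $I\mapsto I\cap P_f$ restricts to an order isomorphism from the order ideals of $P$ of this special form onto all of $J(P_f)$. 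So $[a]\mapsto I(a)\cap P_f$ is an order embedding of the poset of orbits into $J(P_f)$.

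The substantive step is surjectivity. Let $J$ be an order ideal of $P_f$. Because $A$ has bounded order, $f_\alpha=0$ for all sufficiently large $\alpha$, so $P_f$ is finite and $J$ is the down-closure in $P_f$ of its finite set of maximal elements $(v_1,\alpha_1),\dotsc,(v_r,\alpha_r)$. The key observation is that $\alpha_1,\dotsc,\alpha_r$ are pairwise distinct: for elements of $P$ with a common second coordinate one has $(v,\alpha)\geq(v',\alpha)$ exactly when $v\leq v'$, so the elements of $P_f$ with a fixed second coordinate form a chain and an antichain meets each such chain in at most one point. Since $f_{\alpha_j}>0$ for every $j$, the decomposition $A=\prod_\alpha(R/p^\alpha R)^{f_\alpha}$ contains a coordinate block isomorphic to $R/p^{\alpha_j}R$; define $a\in A$ by setting one coordinate in each of these $r$ blocks equal to $p^{v_j}$ and every other coordinate equal to $0$. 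Then the nonzero coordinates of $a$ have orbits exactly $(v_1,\alpha_1),\dotsc,(v_r,\alpha_r)$, so $I(a)$ is the order ideal of $P$ that they generate and $I(a)\cap P_f=J$.

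Putting the pieces together, $[a]\mapsto I(a)\cap P_f$ is an order isomorphism from the poset of $G$-orbits in $A$ onto $(J(P_f),\subseteq)$, which is a distributive lattice by Birkhoff's representation theorem for finite distributive lattices; in particular the poset of orbits is itself a lattice. The only real work is the surjectivity construction, and within it the only delicate point is the observation that the maximal elements of an order ideal of $P_f$ have pairwise distinct $\alpha$-coordinates — without it a single copy of each cyclic summand would not be enough to realize every order ideal. An alternative that avoids re-deriving Theorem~\ref{theorem:degeneracy} is to compose the isomorphism of Theorem~\ref{theorem:poset-description} with an explicit order isomorphism $H_f\to J(P_f)$, matching each strictly increasing sequence $h_0<\dotsb<h_k$ with the order ideal of $P_f$ it determines; this is the ``dictionary'' route, but it requires first making that correspondence precise.
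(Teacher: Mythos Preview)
The paper does not actually prove this statement: it is quoted verbatim from \cite[Theorem~5.4]{Dutta2011a} and no argument is given here. Your proof is correct and is the natural way to derive the result from the companion citation, Theorem~\ref{theorem:degeneracy}: that theorem already gives you well-definedness, injectivity, and the order-embedding property, and your surjectivity construction (choosing one coordinate in each relevant cyclic summand, using that an antichain in $P_f$ has pairwise distinct second coordinates) is exactly what is needed.

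Two small remarks. First, your appeal to Theorem~\ref{theorem:partial-order} to justify that degeneration is a partial order is unnecessary and, strictly speaking, out of scope: Theorem~\ref{theorem:partial-order} assumes countable generation, whereas Theorem~\ref{theorem:old-poset-description} only assumes bounded order. But this is harmless, since Theorem~\ref{theorem:degeneracy} already gives antisymmetry directly (same ideal $\Leftrightarrow$ same orbit). Second, your closing comment about the alternative ``dictionary'' route via $H_f$ is precisely what the paper does in the next theorem, so you have correctly anticipated the paper's own contribution in this section.
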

Therefore, for a countably generated torsion module $A$ of bounded order, there are two different parametrizations of the poset of automorphism orbits: $H_f$ from Theorem~\ref{theorem:poset-description},  and $J(P_f)$ from Theorem~\ref{theorem:old-poset-description}.
It follows that there is a canonical isomorphism between these posets, which we shall now make explicit.

Every countably generated torsion $R$-module of
bounded order is of the form
\begin{equation*}
  A = \bigoplus_{\alpha=1}^\infty (R/p^\alpha)^{f_\alpha}
\end{equation*}
for some sequence $f=\{f_\alpha\}_{\alpha=1}^\infty$ of non-negative integers, of
which only finitely many are strictly positive.
The integer $f_\alpha$ is the $\alpha$th Ulm invariant of $A$.

Given $\{h_0,h_1,\dotsc\}\in H_f$, let $I(\{h_n\})$ denote the order ideal of $P_f$ generated by
\begin{equation*}
	\{(h_{i-1}-i+1,h_{i-1}+1)\:|\: h_i>h_{i-1}+1\}.
\end{equation*}
Conversely, given an ideal $I^0=I\subset P_f$, define
\begin{equation*}
	I^1=\{(v,\alpha)\;|\;(v-1,\alpha)\in I\}.
\end{equation*}
Inductively define $I^n=(I^{n-1})^1$ for each positive integer $n$.
For each ideal $I\subset P_f$ define its height
\begin{equation*}
	h(I)=\min\{v\;|\;(v,\alpha)\in I\}.
\end{equation*}
As a convention, we say that the minimum of the empty set is $\infty$.
Let $\{\kappa(I)_n\}$ denote the sequence
\begin{equation*}
	\kappa(I)_n=h(I^n) \text{ for $n\in \Z_{\geq 0}$.}
\end{equation*}
\begin{theorem}
	The maps $\{h_n\}\mapsto I(\{h_n\})$ and $I\mapsto \kappa(I)$ define mutually inverse isomorphisms between the posets $H_f$ and $J(P_f)$.
\end{theorem}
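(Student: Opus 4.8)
The plan is to route the identification through the poset $\mathcal O$ of automorphism orbits of elements of $A$, which Theorems~\ref{theorem:poset-description} and~\ref{theorem:old-poset-description} have already identified with $H_f$ and with $J(P_f)$ respectively. Concretely, let $u\colon\mathcal O\to H_f$ send an orbit to the Ulm sequence of any of its elements and let $j\colon\mathcal O\to J(P_f)$ send an orbit to $I(a)$ for $a$ in it; these are well defined (Theorems~\ref{theorem:poset-elements} and~\ref{theorem:degeneracy}) and are poset isomorphisms (Theorems~\ref{theorem:poset-description} and~\ref{theorem:old-poset-description}). Then $j\circ u^{-1}\colon H_f\to J(P_f)$ is automatically a poset isomorphism with inverse $u\circ j^{-1}$, and the whole statement reduces to checking that $j\circ u^{-1}$ is the map $\{h_n\}\mapsto I(\{h_n\})$ and that $u\circ j^{-1}$ is the map $I\mapsto\kappa(I)$.

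The key step is a lemma to the effect that multiplication by $p$ corresponds to the shift $I\mapsto I^1$. Writing $a\in A=\bigoplus_\alpha(R/p^\alpha)^{f_\alpha}$ in coordinates, with nonzero coordinates of orbit types $(v_c,\alpha_c)$, I would note that the nonzero coordinates of $pa$ are exactly the $(v_c+1,\alpha_c)$ with $v_c+1<\alpha_c$, so a comparison of downward closures in $P_f$ gives $I(pa)=I(a)^1$. Since also $h(I(a))=\min_c v_c=\min_c h(a_c)=h(a)$, induction yields $h(I(a)^n)=h(I(p^na))=h(p^na)$, that is, $\kappa(I(a))_n=h(p^na)$, which is the $n$-th term of the Ulm sequence of $a$. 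Thus $\kappa\circ j=u$ on $\mathcal O$; restricted to the image $J(P_f)$ of $j$ this reads $\kappa=u\circ j^{-1}$, so $\kappa$ is a poset isomorphism $J(P_f)\to H_f$ with inverse $j\circ u^{-1}$. In particular $\kappa$ is a bijection, and hence it suffices to prove $\kappa(I(\{h_n\}))=\{h_n\}$ for every $\{h_n\}\in H_f$: that forces the map $\{h_n\}\mapsto I(\{h_n\})$ to equal $\kappa^{-1}=j\circ u^{-1}$.

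For this last verification I would write $1\le j_1<\dots<j_m=k+1$ for the ``jump positions'' of $\{h_n\}$, i.e.\ the indices $i$ with $h_i>h_{i-1}+1$, set $j_0=0$, and use that between consecutive $j$'s the sequence increases by exactly $1$. A short calculation identifies the generators of $I(\{h_n\})$ listed in the construction with the pairs $(v_t,\alpha_t)$, $1\le t\le m$, where $v_t=h_{j_{t-1}}-j_{t-1}$ and $\alpha_t-v_t=j_t$; the defining conditions of $H_f$ guarantee these lie in $P_f$, and they form an antichain with $v_1<\dots<v_m$. Unwinding the definition of the shift, $I(\{h_n\})^n$ is the downward closure in $P_f$ of the pairs obtained from the $(v_t,\alpha_t)$ by adding $n$ to the first coordinate and subtracting $n$ from $\alpha_t-v_t=j_t$; hence it is nonempty precisely when some $j_t\ge n+1$, in which case its least first coordinate is $v_s+n$ with $s$ the smallest such index. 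For $n\le k$ this $s$ is the unique index with $j_{s-1}\le n<j_s$, and since $h_n=h_{j_{s-1}}+(n-j_{s-1})=v_s+n$ on that range, one gets $\kappa(I(\{h_n\}))_n=h_n$; for $n\ge k+1$ no such $s$ exists, $I(\{h_n\})^n=\emptyset$, and $\kappa(I(\{h_n\}))_n=\infty=h_n$.

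I expect the main obstacle to be this last step. Pinning down $\kappa(I(\{h_n\}))$ needs careful bookkeeping of the jump positions and of the index shift hard-wired into the generators $(h_{i-1}-i+1,h_{i-1}+1)$, and in particular one must keep track of the matching shift between the two conventions for Ulm invariants in order to see that those generators actually lie in $P_f$. Everything upstream of it is formal once the identity $I(pa)=I(a)^1$ is in hand.
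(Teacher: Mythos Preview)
Your proposal is correct and follows essentially the same route as the paper: both arguments route through the orbit poset via the key identity $I(pa)=I(a)^1$ (together with $h(a)=h(I(a))$) to identify $\kappa$ with $u\circ j^{-1}$, and then finish with an explicit antichain/jump-position computation. The only cosmetic difference is that the paper verifies the composite $I(\kappa(I))=I$ starting from an ideal, while you verify the dual composite $\kappa(I(\{h_n\}))=\{h_n\}$ starting from a sequence; these are the same bookkeeping read in opposite directions, and your awareness of the index shift between the two conventions for $f_\alpha$ is exactly what is needed to see that the generators land in $P_f$.
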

\begin{proof}
  The rationale behind the construction of $\kappa(I)$ from the ideal $I$ is quite clear; for any element $a\in A$, the height of $a$ is $h(I(a))$ and $I(pa)=I^1$.
  It follows that $h(p^na)=h(I(a)^n)$ for each $n\geq 0$.
  Therefore $\kappa(I(a))$ is the height sequence of $a$.
	
  Thus, an element with ideal $I$ has Ulm sequence $\kappa(I)$.
  Since the ideal and the Ulm sequence give rise to isomorphisms from the poset of
  $G$-orbits in $A$ to $J(P_f)$ and $H_f$ respectively, $I\mapsto
  \kappa(I)$ is an isomorphism of posets $J(P_f)\to H_f$.

  Now suppose that
  \begin{equation*}
    \max I = \{ (v_1,\alpha_1),\dotsc,(v_k, \alpha_k)\}
  \end{equation*}
  where $\alpha_1<\dotsb <\alpha_k$.
  Since $\max I$ is an antichain in $P_\lambda$, it follows that
  $v_1<\dotsb<v_k$.
  In particular, $h(I) = v_1$.

  Likewise, for each $i\geq 0$,
  \begin{equation*}
    \max(I^i) = \{(v_j+i,\alpha_j)\;|\; v_j+i<\alpha_j\}.
  \end{equation*}
  Therefore,
  \begin{equation*}
    h(I^i) = \min\{v_j+i\;|\;v_j+i<\alpha_j\},
  \end{equation*}
  and
  \begin{equation*}
    h(I^{i-1}) = \min\{v_j+i-1\;|\;v_j+i-1<\alpha_j\}. 
  \end{equation*}
  Thus $h(I^i)>h(I^{i-1})+1$ if and only if there exists $j$ such that
  \begin{equation*}
    v_j+i-1<\alpha_j\leq v_j+i.
  \end{equation*}
  It follows that $\alpha_j = v_j+i$, and $h(I^{i-1})=v_j+i-1$.
  Thus
  \begin{equation*}
    (v_j,\alpha_j) = (h(I^{i-1})-i+1,h(I^{i-1})+1),
  \end{equation*}
  whence it follows that $I=I(\kappa(I))$.
\end{proof}

\end{document}